\newcommand{\otoprule}{\midrule[\heavyrulewidth]}%
\newtheorem{lemma}{Lemma}
\newtheorem{theorem}[lemma]{Theorem}
\newtheorem{remark}[lemma]{Remark}
\newtheorem{hypo}{Hypothesis}
\let\originalleft\left%
\let\originalright\right%
\renewcommand{\left}{\mathopen{}\mathclose\bgroup\originalleft}%
\renewcommand{\right}{\aftergroup\egroup\originalright}%
\DeclareMathOperator*{\Div}{div}
\renewcommand{\b}[1]{{\boldsymbol{#1}}}
\providecommand{\RR}{\mathbb{R}}
\providecommand{\spacedtext}[2][\quad]{#1\text{#2}#1}
\newcommand{\deep}[1]{\raisebox{-0.625ex}{#1}}
\newcommand{\deeper}[1]{\raisebox{-0.75ex}{#1}}
\newcommand{\set}[1]{\left\{#1\right\}}
\newcommand{\abs}[1]{\left\lvert{#1}\right\rvert}
\newcommand{\norm}[2]{\left\lVert{#1}\right\rVert_{#2}}
\newcommand{\seminorm}[2]{\left\lvert{#1}\right\rvert_{#2}}
\newcommand{\restrict}[2]{\left.{#1}\right|_{#2}}
\newcommand{\meas}[1]{\abs{#1}}%
\providecommand{\Hseminorm}[2]{\seminorm{#1}{1,#2}}
\providecommand{\Hnorm}[2]{\norm{#1}{1,#2}}
\providecommand{\Lnorm}[2]{\norm{#1}{0,#2}}
\providecommand{\Lprod}[2]{\ensuremath{\left({#1},{#2}\right)}}
\providecommand{\cl}[1]{\overline{#1}}%
\providecommand{\PO}{{\mathscr{P}}}
\providecommand{\MP}{M_{\PO}}%
\providecommand{\VP}{\b{V}_{\PO}}%
\providecommand{\bv}{\b{v}}%
\providecommand{\bc}{\b{c}}%
\providecommand{\VX}{\mathcal{X}}
\newcommand{\jump}[1]{\ensuremath{\left\llbracket{#1}\right\rrbracket}}
\newcommand{\avge}[2]{\ensuremath{\left<{#1}\right>_{#2}}}
\begin{document}
%
\providecommand{\coloneqq}{:=}
\newcommand{\bydef}{\coloneqq}%
\title[Taylor--Hood pairs in anisotropic meshes]{The inf-sup stability of the lowest order Taylor-Hood pair on anisotropic meshes}
\author{Gabriel R.\ Barrenechea}
\address{Department of Mathematics and Statistics, University of Strathclyde, 26 Richmond Street, Glasgow G1 1XH, Scotland}
\email{gabriel.barrenechea@strath.ac.uk}
\author{Andreas Wachtel}
\address[corresponding author]{Department of Mathematics, ITAM, R\'io Hondo 1, Ciudad de M\'exico 01080, Mexico}
\email{andreas.wachtel@itam.mx}
\date{Version 12, April 18, 2018}
\thanks{This work has been partially supported by the Leverhulme Trust under grant RPG-2012-483. AW gratefully acknowledges the financial support by Asociaci\'on Mexicana de Cultura A.C.}

\keywords{anisotropic mesh, Taylor, Hood, LBB condition}
\subjclass{65N12, 65N30, 65N50}
\begin{abstract}
  Uniform LBB conditions are of fundamental importance for the finite element solution of problems in incompressible fluid mechanics, such as the Stokes and Navier-Stokes equations. 
  In this work we prove a uniform inf-sup condition for the lowest order Taylor-Hood pairs 
  $\mathbb{Q}_2\times\mathbb{Q}_1$  and  $\mathbb{P}_2\times\mathbb{P}_1$ 
  on a family of anisotropic meshes.
  These meshes may contain  refined edge and corner patches.
  To this end, we generalise Verf\"urth's trick and recent results by some of the authors.
Numerical evidence confirming the necessity of the hypotheses on the partitions is provided.
\end{abstract}
\maketitle

\section{Introduction}

The finite element method for  the Stokes problem is subject to the satisfaction
of the discrete inf-sup condition. For an effective method, the discrete velocity and pressure spaces should
be balanced correctly. This balance results in a discrete inf-sup constant that is independent
of  mesh properties, such as the size and shape of the elements. Concerning the first requirement, there
are many finite element pairs which have been proved to be inf-sup stable on regular meshes (see \cite{BBF13} for an
extensive review). Concerning the second requirement, for the vast majority of methods the
discrete inf-sup constant has only been proved to be independent of the size of the elements, but may
depend on the aspect ratio of the elements of the partition.

This work addresses the last point raised in the previous paragraph. The stability of finite element 
pairs has been less studied in the anisotropic case, but some progress has been made for some specific
pairs, especially using discontinuous pressures and, possibly, non-conforming elements for the velocity.
For example, in the work \cite{duran2008error} the authors analyse the Raviart--Thomas element  of arbitrary order on triangular and tetrahedral meshes under a maximum angle condition.
Another  example is \cite{ANS01} where the authors analyse the Crouzeix--Raviart element in anisotropic meshes
(see also \cite{AD99}). In \cite{SS98,SSS99} the authors consider quadrilateral and triangular elements for the
$hp$-FEM, and analyse  pairs like the $\mathbb{Q}_{k+1}^2\times\mathbb{Q}_{k-1}$ ($k\ge 1$) with continuous velocities and discontinuous pressures.
Their analysis shows that this family is uniformly inf-sup stable in edge patches, but corner patches were excluded.
This is later analytically justified in \cite{AC2000}, where the quadrilateral element $\mathbb{Q}_{k+1}^2\times \mathbb{P}_{k-1}$
was analysed and it was proved that its inf-sup constant depends on the geometric properties
of the quadrilaterals in corner patches. 
In the same reference, the authors propose an enrichment of the velocity space with bubble functions whose degree depends on the aspect ratio.
Alternatively, in the recent work \cite{ABW14} a penalisation based on jumps of the pressure was designed to show inf-sup stability independent of the aspect ratio of the partition.
For a review and further references on this topic, see  \cite{Apel2003}.

In this work we study the
stability of the lowest-order Taylor-Hood pair in anisotropic meshes. This pair was originally
proposed in \cite{HT73}, and was first analysed in \cite{BP77} and \cite{Ver84} for the lowest order on triangles.
In the independent works \cite{BF91} and \cite{Ste90} the analysis was extended to higher-order families,
both in the triangular and quadrilateral cases, in two space dimensions. 
Later, in \cite{Bof97}, the three-dimensional case was addressed, using tetrahedral meshes. 

As far as we are aware, no general proof of stability  is available for the Taylor-Hood pair in the anisotropic case.
The only exception is, up to our best knowledge, the work \cite{BLR12}, where the inf-sup condition
is proved under the assumption that no drastic change of sizes of neighbouring cells in any direction may occur, see \cite[Assumption~1]{BLR12}. Moreover, for some anisotropic edge patches,
negative results, in the form of numerical experiments, are given in     \cite{SSS99} and  \cite{Apel2003}. 
In particular, in \cite{Apel2003}
instabilities are reported for the $\mathbb{P}_2^2\times\mathbb{P}_1$ pair  when the aspect ratio tends to zero in certain configurations.
The purpose of this work is then to give a rigorous proof of the inf-sup stability of the lowest order Taylor-Hood
pair, both in the quadrilateral and triangular cases. Our main result states that if the partitions have enough
internal degrees of freedom, and these are located in the right locations, then this element is uniformly
inf-sup stable. That is, the inf-sup constant is independent of the aspect ratio. Our proof will
be valid both for edge and corner patches, thus improving upon the results from \cite{BLR12}. 
Finally, we will show, by means of numerical experiments, that our assumptions are optimal, which 
complements the results from \cite{Apel2003}.

The rest of the paper is organised as follows. In \S\,\ref{Stokes-section} we present the problem of interest
and give some notation. Our main results are then stated in \S~\ref{sec:TH21} and tested numerically in 
\S\,\ref{numerics-section}. The proofs for the quadrilateral case are then presented in \S\,\ref{proof-quads},
and for the triangular case  in \S\,\ref{sec:App2x1MEP2P1}.

\section{Preliminaries and notation}\label{Stokes-section}

Let $\Omega\subset\mathbb{R}^2$ be  an open, bounded, connected and polygonal domain.
Throughout, we use standard notation for Sobolev spaces (see  \cite{GR86}), namely, for $D\subset\Omega$,  $L^2(D)$ (resp., $L^2_0(D)$) stands for the space of (generalised) functions which are square integrable in $D$ (resp., which belong to $L^2(D)$ and have zero mean value in $D$), ${H}^1(D)$ ($H^1_0(D)$) are elements of $L^2(D)$ 
whose first order derivatives belong to $L^2(D)$
(and whose trace is zero on $\partial D$).
Vector-valued spaces and functions will be denoted using  bold-faced letters.
The inner product in $L^2(D)$ (or $\b{L}^2(D)$) is denoted by $\Lprod{\cdot}{\cdot}_D$, with associated norm $\Lnorm{\cdot}{D}$,   the norm (seminorm) in $H^1(D)$ is denoted by  $\Hnorm{\cdot}{D}$ ( $\Hseminorm{\cdot}{D}$ ).
By virtue of the Poincar\'e inequality $\Hseminorm{\cdot}{D}$ is a norm on $H^1_0(D)$.
Finally, we denote by $\chi_D^{}$ the characteristic function of $D$.

A classical result (see \cite{GR86}) is the following inf-sup condition:
There exists $\beta_\Omega>0$, depending only on $\Omega$, such that
\begin{equation}
  \inf_{q\in M}\sup_{\bv\in\b{V}} \frac{\Lprod{\Div\bv}{q}_\Omega}{\Hseminorm{\bv}{\Omega}\Lnorm{q}{\Omega}} \geq \beta_\Omega >0\,,
  \label{isc:VM}
\end{equation}
where $\b{V}\times M \bydef \b{H}^1_0(\Omega)\times L^2_0(\Omega)$.
The purpose of this work is to prove the discrete analogue of this result for the lowest order Taylor--Hood element on anisotropic meshes.

\subsection{Finite element spaces}
We require a partition $\PO$ of $\Omega$ to have  the following properties.
We suppose $\Xi$ is a conforming shape regular partition of $\Omega$ into parallelograms (macro elements).
These macro element cells are denoted by $\omega$.
The partition $\PO$ is a conforming refinement of $\Xi$ and may  contain edge patches as in Figure~\ref{fig:EPs1}(d-f) 
and  corner patches as in Figure~\ref{fig:CPs2}(c-d) or as described below in Remark~\ref{rem:joinedCPs}.
An example of such a mesh is depicted in Figure~\ref{fig:Lparameterized}.
Some further refinements as in  Figure~\ref{fig:refinedPatches} are allowed.


In Section~\ref{sec:App2x1MEP2P1}, we analyse the Taylor-Hood space defined in simplicial triangulations.
For that case we will suppose that  $\PO$ is a triangulation obtained by dividing each quadrilateral of a
 partition satisfying the assumptions above into two triangles of equal area.

We now define the discontinuous space  
\[
  \mathbb{Q}_{-\ell,\PO} \bydef \set{q\in L^2(\Omega)\colon \restrict{q}{K}\in \mathbb{Q}_\ell(K) \text{ for } K\in\PO}  \spacedtext{for} \ell=1,2\,,
\]
and for  $\omega\subseteq\Omega$ the (locally) continuous spaces
\[
  \mathbb{Q}_{\ell,\PO}(\omega) \bydef \set{q\in\mathbb{Q}_{-\ell,\PO}\colon \mathrm{supp}\,q\subseteq\cl{\omega}}\cap C^0(\omega) \,.
\]
Let $\MP(\omega) \bydef \mathbb{Q}_{1,\PO}(\omega) \cap L^2_0(\omega)$; 
in the case of  $\omega = \Omega$ we write shortly $\MP$.
Let $\VP(\omega) \bydef [\mathbb{Q}_{2,\PO}(\omega)]^2\cap \b{H}^1_0(\omega)$;
in the case of  $\omega = \Omega$ we write shortly $\VP$.

Throughout this manuscript, $C$ (with or without subscript) will denote a positive constant, which will be
 independent of the size, and aspect ratio of the elements of a given partition. The value of such a constant
needs not to be equal whenever written in two different places.

\section{Main results}
\label{sec:TH21}

In this section we state the main results of this work. 
The proofs of these results are postponed to \S\,\ref{proof-quads}.
To ease the readability, we have restricted the presentation of the results to 
rectangular partitions, but the proofs can be extended
 to the case in which the whole macro element $\omega\in\Xi$ is a parallelogram, and every $K\in\PO$, $K\subseteq\omega$ is the image of the corresponding subset of the reference cell by the same affine transformation that maps $(0,1)^2$ into $\omega$.

It is worth mentioning that, under this assumption, the finite element spaces defined in the last section
are mapped finite element spaces. 
Furthermore, we sketch in  \S\,\ref{sec:App2x1MEP2P1}  how the proofs can be extended to triangulated edge and corner patches.

Below, Theorem~\ref{thm:THEPs} states the main result for edge patches,
while Theorem~\ref{thm:THCPs} covers the case of  corner patches. The results are presented for only one
patch in each case, but the proof of a global inf-sup condition (\emph{i.e.}, an inf-sup condition
involving $\VP$ and $\MP$) can be obtained using a macro-element technique, as it was done in \cite{ABW14}.

\begin{hypo}
  \label{hyp_GoodEdgePatch}
  We will suppose that every edge patch contains at least one division in the direction orthogonal to the ``long and thin'' elements.
More precisely, every ``long and thin'' element is divided into two ``long and thin'' elements.
\end{hypo}

\begin{figure}[htbp]
  \centering
	\includegraphics{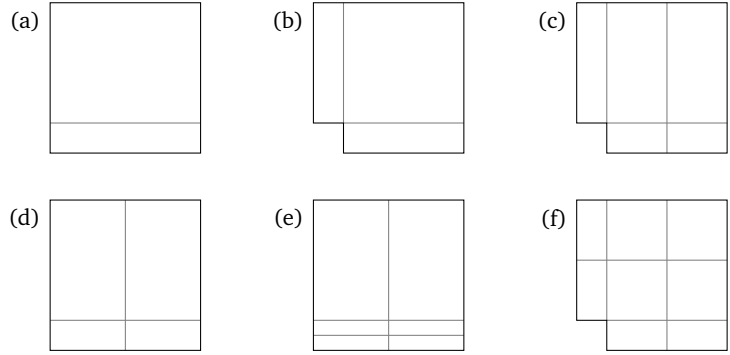}
	\caption{Edge patches. The cases in the top row 
do not satisfy Hypothesis~\ref{hyp_GoodEdgePatch}, but the cases in the bottom row do satisfy it.
The examples depicted in (b), (c) and (f)  are overlapped edge patches.}
  \label{fig:EPs1}
\end{figure}

\begin{theorem}
  \label{thm:THEPs}
Let $\omega\subset \Omega$ be partitioned as an edge patch as depicted in Figure~\ref{fig:EPs1} (d,e,f).
Then, there exists a constant $\beta>0$, independent of the shape and size of the elements in $\PO$, such that, 
 \begin{equation}\label{infsup-Q2Q1}
   \inf_{q\in \MP(\omega)}\sup_{\bv\in \VP(\omega)}\frac{\Lprod{\Div \bv}{q}_\omega}{|\bv|_{1,\omega}\Lnorm{q}{\omega}}\ge \beta\,.
\end{equation}
\end{theorem}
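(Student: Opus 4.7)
The plan is to adapt Verf\"urth's trick to the anisotropic setting. Given $q\in \MP(\omega)$, we seek $\b{v}\in \VP(\omega)$ realising the inf-sup with constants independent of the aspect ratios, writing $\b{v} = \b{v}_1 + \delta\b{v}_2$ as the superposition of two velocities that handle complementary features of $q$.

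For $\b{v}_1$, an affine change of variables reduces $\omega$ to a reference edge patch, on which the continuous inf-sup \eqref{isc:VM} yields $\b{v}^\ast\in\b{H}^1_0(\omega)$ with $\Div\b{v}^\ast = q$ and $\Hseminorm{\b{v}^\ast}{\omega}\le C\Lnorm{q}{\omega}$, where $C$ depends only on the reference geometry. We then design an anisotropic Fortin operator $\Pi_\PO\colon \b{H}^1_0(\omega)\to\VP(\omega)$ that is $H^1$-stable uniformly in the aspect ratios and preserves vertex values together with selected edge-midpoint moments of $\Div \b{v}^\ast$, and set $\b{v}_1\coloneqq \Pi_\PO\b{v}^\ast$. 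The resulting Fortin-type estimate takes the form
\[
  \Lprod{\Div \b{v}_1}{q}_\omega \ge \alpha_1\Lnorm{q}{\omega}^2 - C\,\Lnorm{q}{\omega}\Bigl(\sum_{K\in\PO} h_{K,x}^2\Lnorm{\partial_x q}{K}^2 + h_{K,y}^2\Lnorm{\partial_y q}{K}^2\Bigr)^{\!1/2},
\]
with $h_{K,x}, h_{K,y}$ the side lengths of the axis-aligned cell $K$.

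For $\b{v}_2$, we assemble locally supported $\mathbb{Q}_2$ velocities: on every sub-cell created by Hypothesis~\ref{hyp_GoodEdgePatch}, one combines an interior bubble with the new edge-midpoint DOFs on the subdivision line to build a $\b{v}_{2,K}$ aligned (in a direction-wise sense) with the piecewise polynomial gradient $\nabla q$. Integration by parts combined with direction-wise inverse estimates then produces
\[
  \Lprod{\Div \b{v}_2}{q}_\omega \ge \alpha_2\sum_{K\in\PO}\bigl(h_{K,x}^2\Lnorm{\partial_x q}{K}^2 + h_{K,y}^2\Lnorm{\partial_y q}{K}^2\bigr),
\]
together with an anisotropic upper bound on $\Hseminorm{\b{v}_2}{\omega}^2$ of the same order. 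Choosing $\delta>0$ small and independent of the mesh absorbs the negative term in the previous display and produces \eqref{infsup-Q2Q1}.

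The principal technical difficulty lies in the construction of $\Pi_\PO$ with $H^1$-stability uniform in the aspect ratios, since classical Scott-Zhang or Cl\'ement interpolants degrade in this regime on edge patches; a direction-wise $L^2$-type projection built on the macro-cell structure is the natural substitute. This is exactly where Hypothesis~\ref{hyp_GoodEdgePatch} enters decisively: the compulsory subdivision of every long and thin element supplies the interior vertex whose $\mathbb{Q}_1$ pressure mode would otherwise be out of reach of any aspect-ratio-robust Fortin interpolant, consistently with the numerical instabilities documented in \cite{Apel2003} when the subdivision is absent.
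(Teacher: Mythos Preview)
Your outline follows the classical Verf\"urth template, but the two steps you flag as ``the principal technical difficulty'' are precisely the ones that are not carried out, and at least one of them faces a concrete obstruction.

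For the $\b{v}_2$ part, consider a flat cell $K\subset M$ with sides $h_{K,x}=H$ and $h_{K,y}=h$, $h\ll H$. To recover the term $h_{K,x}^2\Lnorm{\partial_x q}{K}^2=H^2\Lnorm{\partial_x q}{K}^2$ you need a first component $v$ supported on $K$ (or a bounded patch of flat cells) with $\Lprod{\partial_x v}{q}_K\gtrsim H^2\Lnorm{\partial_x q}{K}^2$ and $\Hseminorm{v}{K}\lesssim H\Lnorm{\partial_x q}{K}$. But any $v\in\mathbb{Q}_2(K)$ vanishing on the horizontal edges satisfies $\Lnorm{\partial_y v}{K}\sim h^{-1}\Lnorm{v}{K}$, so $\Hseminorm{v}{K}\gtrsim h^{-1}\Lnorm{v}{K}\gtrsim h^{-1}H\,H\Lnorm{\partial_x q}{K}$, which blows up. Hypothesis~\ref{hyp_GoodEdgePatch} subdivides in the $x$-direction and adds midpoints on the new \emph{vertical} edges; it does not create any degree of freedom that lets you build a horizontal velocity on $M$ with an aspect-ratio-robust $H^1$ bound. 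The same scaling issue is what makes an aspect-ratio-uniform Fortin operator on $M$ hard, so admitting that step as an assumption is essentially assuming the theorem.

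The paper avoids this entirely. It never controls $\partial_x q$ on $M$ with velocities supported in $M$. Instead it decomposes $\MP(\omega)\subset G_\omega\oplus B_\omega$, where $B_\omega$ is spanned by the pressures that are constant in $y$ on $M$ (namely $\phi_{1,M},\phi_{2,M}$) together with a piecewise constant $\phi_0$, and $G_\omega=\MP(M')\oplus G_M$ with $G_M=B_M^\perp$. On $G_M$ only $\partial_y q$ matters, and an explicit choice of second-component bubbles $v_3,v_4,v_5$ (built from the interior $\mathbb{Q}_2$ bubbles $b_0,b_1,b_2$) gives $\Lprod{\partial_y v^\star}{g}_M=\Lnorm{g}{M}^2$ with $\Hseminorm{v^\star}{M}\le C\Lnorm{g}{M}$ uniformly in $h/H$ (Lemma~\ref{lem:decompFLATME}). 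The space $B_\omega$ is then handled by the three edge bubbles $f_0,f_1,f_2$ living on $\gamma=M\cap M'$ (Lemma~\ref{lem:uiscBOmega2}); the connection to the shape-regular block $M'$ is what replaces your missing long-direction control. The two pieces are glued by Lemma~\ref{lem:GVerfuerth}, which plays the role of your $\b{v}_1+\delta\b{v}_2$ combination but with the projection $\Pi_B$ onto $B_\omega$ in place of a Fortin operator. No continuous inf-sup on $\omega$ and no anisotropic quasi-interpolation are needed.
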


\begin{remark}\label{remark1}
One clear consequence of  the proof of Theorem~\ref{thm:THEPs} is that the inf-sup constant $\beta$
is the same one for the cases depicted in Figure~\ref{fig:EPs1}(d) and  Figure~\ref{fig:EPs1}(e). This opens
the door to study the case in which an edge patch has been refined further.
More precisely,  if we say that the edge patch in Figure~\ref{fig:EPs1}(d) is refined $r=0$ times, and the patch in Figure~\ref{fig:EPs1}(e) is refined $r=1$ times, then, continuing in the same way, 
\emph{i.e.}, bisecting the long and thin elements $r$ times,  we can define a patch that has been refined $r\ge 2$ times. 
Our conjecture, which is supported by numerical evidence shown in Section~\ref{numerics-section}, is that the
inf-sup constant is not affected by the value of $r$, but a formal proof of this fact is lacking.
\end{remark}

The next result concerns corner patches.
A corner patch will be decomposed as $\omega= \omega_\bc \cup \omega_E^{}$ where $\omega_\bc$ is the shape-regular small region (shaded in Figure~\ref{fig:CPs2} for each corner patch) and $\omega_E^{}$ is an ``overlapped'' edge patch as discussed in Theorem~\ref{thm:THEPs} (see Figure~\ref{fig:EPs1}).
For the  proof of Theorem~\ref{thm:THCPs}, we will need the following assumption.

\begin{hypo}
  \label{hyp_GoodCorner}
  The Partition on $\omega_\bc\subset \omega$ is such that the pair $\VP(\omega_\bc)\times\MP(\omega_\bc)$ is (uniformly) inf-sup stable with a constant $\beta_\bc$.
\end{hypo}

\begin{figure}[!hbtp]
  \centering
	\includegraphics{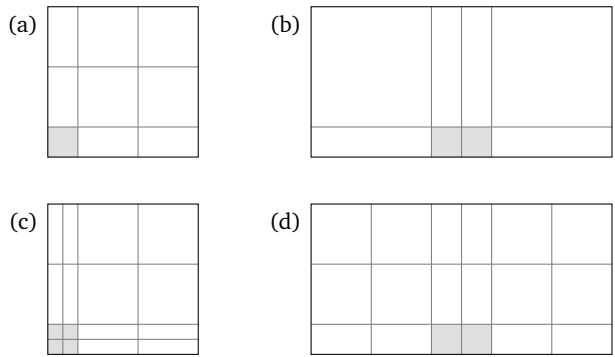}
	\caption{Corner patches. Case (a) does not satisfy Hypothesis~\ref{hyp_GoodCorner}, the others do.
	However, the assumptions of Theorem~\ref{thm:THCPs} also exclude case (b) since the ``overlapped'' edge patch does not satisfy Hypothesis~\ref{hyp_GoodEdgePatch}.}
  \label{fig:CPs2}
\end{figure}

\begin{theorem}
  \label{thm:THCPs}
  Let $\omega\subset \Omega$ be partitioned into a corner patch, that is, $\omega=\omega_\bc \cup \omega_E^{}$ with $\omega_\bc$ and $\omega_E^{}$ satisfying Hypothesis~\ref{hyp_GoodCorner} and \ref{hyp_GoodEdgePatch}, respectively (\emph{e.g.}\ as shown in Figure~\ref{fig:CPs2} (c-d)). 
 Then, there exists $\beta_\PO>0$, depending only on $\beta$ (from \eqref{infsup-Q2Q1}) and $\beta_\bc^{}$
(from Hypothesis~\ref{hyp_GoodCorner}), such that
\[
   \inf_{q\in \MP(\omega)}\sup_{\bv\in \VP(\omega)}\frac{\Lprod{\Div \bv}{q}_\omega}{|\bv|_{1,\omega}\Lnorm{q}{\omega}}\ge \beta_\PO\,.
\]
\end{theorem}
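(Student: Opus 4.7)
The plan is a macro-element-style argument that reduces the corner-patch inf-sup to the two local inf-sup conditions supplied by Hypothesis~\ref{hyp_GoodCorner} and Theorem~\ref{thm:THEPs}, together with an auxiliary construction that controls the one-dimensional piecewise-constant ``pressure jump'' between $\omega_{\bc}$ and $\omega_E^{}$.

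Given $q\in\MP(\omega)$, I would first introduce the local averages $\bar q_{\bc}\coloneqq\abs{\omega_{\bc}}^{-1}\int_{\omega_{\bc}}q$ and $\bar q_E^{}\coloneqq\abs{\omega_E^{}}^{-1}\int_{\omega_E^{}}q$, and decompose $q=p_1+p_2+p_3$ with $p_1\coloneqq(q-\bar q_{\bc})\chi_{\omega_{\bc}}$, $p_2\coloneqq(q-\bar q_E^{})\chi_{\omega_E^{}}$, and $p_3\coloneqq\bar q_{\bc}\chi_{\omega_{\bc}}+\bar q_E^{}\chi_{\omega_E^{}}$ (in the overlapped case one replaces the characteristic functions by a partition of unity subordinate to $\{\omega_{\bc},\omega_E^{}\}$). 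The three pieces are pairwise $L^2$-orthogonal, so $\Lnorm{q}{\omega}^2=\Lnorm{p_1}{\omega_{\bc}}^2+\Lnorm{p_2}{\omega_E^{}}^2+\Lnorm{p_3}{\omega}^2$. Moreover $p_1\in\MP(\omega_{\bc})$, $p_2\in\MP(\omega_E^{})$, and the zero-mean condition on $\omega$ forces $\bar q_{\bc}\abs{\omega_{\bc}}+\bar q_E^{}\abs{\omega_E^{}}=0$, so $p_3$ spans a fixed one-dimensional subspace.

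Next, Hypothesis~\ref{hyp_GoodCorner} yields $\bv_1\in\VP(\omega_{\bc})$ with $\Lprod{\Div\bv_1}{p_1}_{\omega_{\bc}}\geq\beta_{\bc}\Lnorm{p_1}{\omega_{\bc}}^2$ and $\Hseminorm{\bv_1}{\omega_{\bc}}=\Lnorm{p_1}{\omega_{\bc}}$, and Theorem~\ref{thm:THEPs} provides the analogous $\bv_2\in\VP(\omega_E^{})$ with constant $\beta$; both extend by zero to elements of $\VP(\omega)$. For the constant mode I would construct a third test function $\bv_3\in\VP(\omega)$, concentrated near the common interface $\Gamma\coloneqq\overline{\omega_{\bc}}\cap\overline{\omega_E^{}}$, whose flux $\int_{\omega_{\bc}}\Div\bv_3$ has the sign of $\bar q_{\bc}-\bar q_E^{}$ and which satisfies $\Lprod{\Div\bv_3}{p_3}_\omega\geq\gamma\Lnorm{p_3}{\omega}\Hseminorm{\bv_3}{\omega}$ with $\gamma$ independent of the aspect ratio. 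Setting $\bv=\bv_1+\bv_2+\lambda\bv_3$ for a small $\lambda>0$, the cross terms $\Lprod{\Div\bv_k}{p_3}_\omega$ ($k=1,2$) vanish because $p_3$ is constant on each subdomain and $\int_{\omega_j}\Div\bv_k=0$ by $\bv_k\in\b{H}^1_0(\omega_k)$, while the remaining cross terms $\lambda\Lprod{\Div\bv_3}{p_1+p_2}_\omega$ are absorbed through Cauchy--Schwarz and Young's inequality. Optimising in $\lambda$ then delivers an inf-sup constant $\beta_\PO$ depending only on $\beta$, $\beta_{\bc}$ and $\gamma$.

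The principal obstacle is constructing $\bv_3$ with a constant $\gamma$ that does not deteriorate as the aspect ratio of $\omega_E^{}$ becomes extreme. A naive choice such as a single $\mathbb{Q}_2$ nodal basis function centred at a vertex on $\Gamma$ fails, because its $H^1$-seminorm on a very elongated neighbouring element scales like the reciprocal of the aspect ratio, which overwhelms its flux contribution across $\Gamma$. A workable construction should instead spread $\bv_3$ along several elements of the thin strip adjacent to $\Gamma$, shaping the profile to be nearly divergence-free inside $\omega_E^{}$ away from $\Gamma$; this is precisely the point at which the midline subdivision guaranteed by Hypothesis~\ref{hyp_GoodEdgePatch} enters the argument, as it supplies the interior degrees of freedom required for a Poiseuille-type ansatz in each elongated element. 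Once $\gamma$ is secured independently of the mesh, the combination step is a routine Verf\"urth-type absorption and completes the proof.
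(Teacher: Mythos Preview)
Your decomposition $q=p_1+p_2+p_3$ and the control of $p_1,p_2$ via the two local inf-sup conditions coincide with the paper's argument (there $q^\star=p_1+p_2$ and $\Pi_{\bc}q=p_3$). The gap is in the treatment of $p_3$. You propose to build $\bv_3\in\VP(\omega)$ with $\Lprod{\Div\bv_3}{p_3}_\omega\ge\gamma\Lnorm{p_3}{\omega}\Hseminorm{\bv_3}{\omega}$ and $\gamma$ independent of the aspect ratio, but such a $\bv_3$ does not exist on these meshes. Indeed, the flux $c=\int_{\omega_{\bc}}\Div\bv_3=\int_{\Gamma}\bv_3\cdot\b{n}$ must traverse edges $e\subset\Gamma$ whose neighbouring cell $K_e'\subset\omega_E^{}$ is a thin rectangle of dimensions roughly $H\times h$ with one long side on $\partial\omega$. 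Since the normal component of $\bv_3$ vanishes on that boundary side, a Poincar\'e inequality across the short direction of $K_e'$ followed by the discrete trace estimate along its long direction gives $\Lnorm{\bv_3\cdot\b{n}}{e}\le C\,h\,H^{-1/2}\Hseminorm{\bv_3}{K_e'}$, and hence $|c|\le C\,h^{3/2}H^{-1/2}\Hseminorm{\bv_3}{\omega}$. Combined with $\Lnorm{p_3}{\omega}\sim h\,|\bar{q}_{\bc}|$ and $\Lprod{\Div\bv_3}{p_3}_\omega\sim c\,\bar{q}_{\bc}$, this forces $\gamma\le C\sqrt{h/H}$. Spreading $\bv_3$ along the strip, a Poiseuille-type profile, or the midline subdivision of Hypothesis~\ref{hyp_GoodEdgePatch} do not change this, because the bound comes solely from the single thin cell touching $\Gamma$. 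This is consistent with the theorem: $p_3\notin\MP(\omega)$, so no uniform inf-sup is claimed for it.

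The paper sidesteps the obstruction by never building $\bv_3$. It uses instead that $q\in\MP(\omega)$ is \emph{continuous}: on every edge $e\subset\Gamma$ one has $\jump{p_3}_e=-\jump{p_1+p_2}_e$, and since $\jump{p_3}_e$ equals the constant $\bar{q}_{\bc}-\bar{q}_E$, a discrete trace inequality applied on the \emph{shape-regular} neighbour $K_e\subset\omega_{\bc}$ yields $\Lnorm{p_3}{\omega}\le C\Lnorm{p_1+p_2}{\omega}$ with $C$ independent of $h/H$. Consequently $\Lnorm{q}{\omega}\le(1+C)\Lnorm{p_1+p_2}{\omega}$, and the control of $p_1+p_2$ by $\bv_1+\bv_2\in\VP(\omega_{\bc})\oplus\VP(\omega_E^{})$ already delivers the inf-sup on $\omega$; no third test function is required.
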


\begin{remark}\label{rem:joinedCPs}
  A comment on Hypothesis~\ref{hyp_GoodCorner} is in place.
  In the case of a single corner patch, Hypothesis~\ref{hyp_GoodCorner} requires $\omega_\bc$ to be refined (as in Figure~\ref{fig:CPs2}(c)).
  Now, if a corner patch is formed ``joining''  several patches of the type shown in Fig.2(a), see for instance Fig.2(d), then the presence of enough degrees of freedom inside of $\omega_\bc$ ensures that Hypothesis~\ref{hyp_GoodCorner} is satisfied without the need of further refinement.
\end{remark}

\section{Numerical confirmation}\label{numerics-section}
In this section we report the discrete inf-sup constants for the Taylor-Hood pairs in 
different configurations of corner and edge patches. 
Our aim is to confirm the validity of the results from last section. Then, we introduce the parameter $h$ in such a way
that the short side of an anisotropic element is of size $h$, and the long one is $O(1)$ 
(see Figure~\ref{fig:EPCSparameterized} for details). For the refinement in the direction orthogonal
to the long and thin element required by Hypothesis~\ref{hyp_GoodEdgePatch}, we have introduced an extra edge
at the midpoint of the long and thin edges (for example, in Figure~\ref{fig:EPs1}(e) the extra vertical edge
is located at $x=1/2$, and in Figure~\ref{fig:EPs1}(f) is located at $x=(1+h)/2$).

\begin{figure}[htbp]
  \centering
	\includegraphics{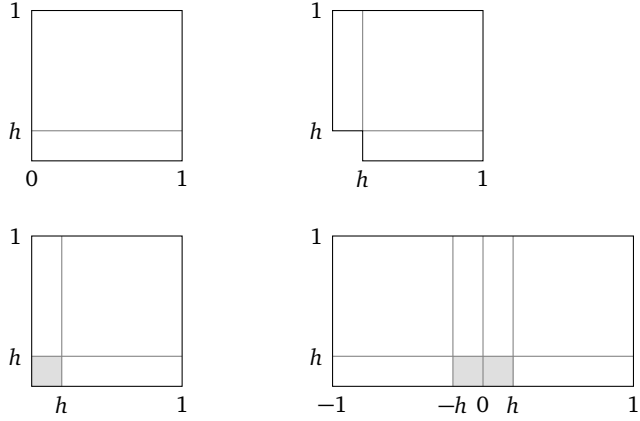}
	\caption{Parametrised geometries for the coarse partitions  of the edge and corner patches shown in Figures~\ref{fig:EPs1} and \ref{fig:CPs2}. That is,  before subdividing cells so that  Hypotheses~\ref{hyp_GoodEdgePatch} and \ref{hyp_GoodCorner} are satisfied.}
  \label{fig:EPCSparameterized}
\end{figure}

\subsection{Results on edge patches} 
Our first experiments (in Table~\ref{tab:exps-EPs}) aim at showing that Hypothesis~\ref{hyp_GoodEdgePatch} imposes  sufficient (and necessary) requirements on edge patches  such that the pair $\mathbb{Q}_2^2\times\mathbb{Q}_1$ is uniformly stable on them.
The results reported in the last two columns of Table~\ref{tab:exps-EPs} show that, once the requirements 
from Hypothesis~\ref{hyp_GoodEdgePatch} are fulfilled,
the inf-sup constant remains bounded below by a constant independent of $h$. Now, to assess the
necessity of this restriction we also report in the first two columns of Table~\ref{tab:exps-EPs} the results
for partitions that do not satisfy this hypothesis, where we see that the inf-sup constants degenerate
with $h$. 

\begin{table}[hbtp]
\centering
\caption{Discrete inf-sup constants of the pair $\mathbb{Q}_2^2\times\mathbb{Q}_1$ on edge patches shown in Figure~\ref{fig:EPs1}}
\label{tab:exps-EPs}
\begin{tabular}{lcccc}
\toprule
\multicolumn{1}{c}{$h$} 
& \multicolumn{1}{c}{Fig.\,\ref{fig:EPs1}(a)} 
& \multicolumn{1}{c}{Fig.\,\ref{fig:EPs1}(c)} 
& \multicolumn{1}{c}{Fig.\,\ref{fig:EPs1}(d)} 
& \multicolumn{1}{c}{Fig.\,\ref{fig:EPs1}(f)} 
\\
\otoprule
$10^{-1}$  & $7.12\cdot10^{-2}$  &  $1.07\cdot10^{-1}$  &  $0.426$ &    $0.487$ \\
$10^{-2}$  & $7.83\cdot10^{-3}$  &  $1.17\cdot10^{-2}$  &  $0.256$ &    $0.477$ \\
$10^{-3}$  & $7.90\cdot10^{-4}$  &  $1.20\cdot10^{-3}$  &  $0.208$ &    $0.469$ \\
$10^{-4}$  & $7.90\cdot10^{-5}$  &  $1.21\cdot10^{-4}$  &  $0.202$ &    $0.468$ \\
$10^{-5}$  & $7.91\cdot10^{-6}$  &  $1.21\cdot10^{-5}$  &  $0.201$ &    $0.468$ \\
\bottomrule
\end{tabular}
\end{table}

\subsection{Results on corner patches} 
In Table~\ref{tab:exps-CPs} we report the results obtained for different  configurations of corner patches. 
We confirm the results of Theorem~\ref{thm:THCPs} in the sense that whenever the hypotheses imposed on
the partitions are satisfied, the inf-sup constant remains bounded below by a constant
independent of $h$ (as it can be seen in the last two columns). On the contrary, the first column
of Table~\ref{tab:exps-CPs} shows that if the hypotheses are violated, then the inf-sup constant decays with $h$.

\begin{table}[hbtp]
\centering
\caption{Discrete inf-sup constants of the pair $\mathbb{Q}_2^2\times\mathbb{Q}_1$ on corner patches shown in Figure~\ref{fig:CPs2}}
\label{tab:exps-CPs}
\begin{tabular}{lccc}
\toprule
\multicolumn{1}{c}{$h$} 
& \multicolumn{1}{c}{Fig.\,\ref{fig:CPs2}(b)} 
& \multicolumn{1}{c}{Fig.\,\ref{fig:CPs2}(c)} 
& \multicolumn{1}{c}{Fig.\,\ref{fig:CPs2}(d)} 
\\
\otoprule
$10^{-1}$ &     $4.01\cdot10^{-1}$ &  $0.455$  &    $0.341$   \\ 
$10^{-2}$ &     $1.82\cdot10^{-1}$ &  $0.406$  &    $0.324$   \\ 
$10^{-3}$ &     $6.07\cdot10^{-2}$ &  $0.384$  &    $0.323$   \\ 
$10^{-4}$ &     $1.93\cdot10^{-2}$ &  $0.382$  &    $0.322$   \\ 
$10^{-5}$ &     $6.11\cdot10^{-3}$ &  $0.381$  &    $0.322$   \\ 
\bottomrule
\end{tabular}
\end{table}

\subsection{The stability on an L-shaped domain}
The previous examples were restricted to one single edge, or corner, patch. Next, we show an
example in which edge and corner patches are combined. For this, we choose 
$\Omega=int\Big([-2,1]\times[0,1] \cup [0,1]\times [-1,0]\Big)$, and partition it into an edge patch
and a corner patch, as depicted in Figure~\ref{fig:Lparameterized}. Once again the inf-sup constant stays bounded below
by a constant independent of the value of $h$.

\begin{figure}[htbp]
  \centering
  \raisebox{-.5\height}{\includegraphics{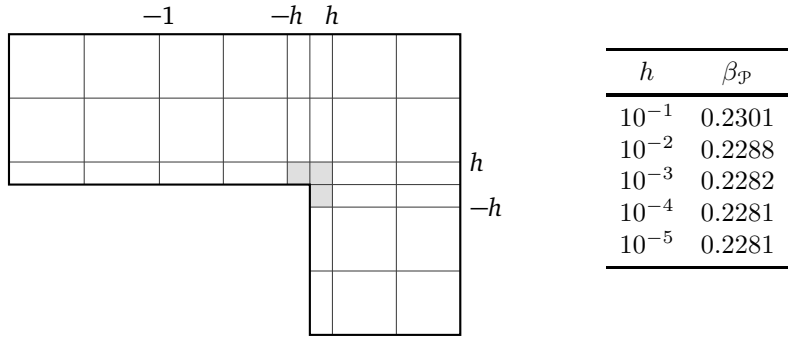}}
  \hspace{1cm}
\begin{tabular}{lc}
\toprule
\multicolumn{1}{c}{$h$} 
& \multicolumn{1}{c}{$\beta_\PO$} 
\\
\otoprule
$10^{-1}$  &    $0.2301$ \\
$10^{-2}$  &    $0.2288$ \\
$10^{-3}$  &    $0.2282$ \\
$10^{-4}$  &    $0.2281$ \\
$10^{-5}$  &    $0.2281$ \\
\bottomrule
\end{tabular}

	\caption{Discrete inf-sup constants for the Taylor-Hood element in the L-shape domain.}
  \label{fig:Lparameterized}
\end{figure}

\subsection{Refined patches}
\label{sec:refinements}

In this section we verify numerically the claims made in Remark~\ref{remark1}. For this,
we consider the refined edge and corner patches depicted in Figure~\ref{fig:refinedPatches}. 
In Table~\ref{tab:isc-refined} we report the values of the inf-sup constants. We observe that, as was conjectured
in Remark~\ref{remark1}, for the case of a refined edge patch the inf-sup constants reported
in the first two columns remain
independent of the number of refinements, thus confirming that conjecture, at least numerically.
Now, for the case of a refined corner patch, we see that, as mentioned in Remark~\ref{rem:CPrefinements} (after the proof of
Theorem~\ref{thm:THCPs}),
the value of the inf-sup constants (reported on the third column of Table~\ref{tab:isc-refined})
decreases with the number of refinements, thus confirming
the sharpness of the proof of Theorem~\ref{thm:THCPs}.

\begin{figure}[htbp]
  \centering
  \includegraphics{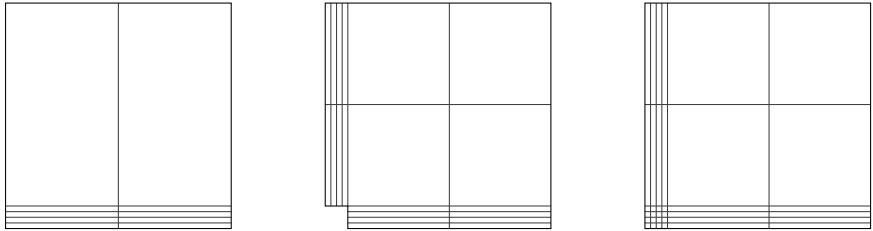}
  \caption{These patches are refinements of level $r=2$ of edge-patches and a corner patch.}
  \label{fig:refinedPatches}
\end{figure}

\begin{table}[hbtp]
\centering
\caption{We fix the parameter $h=10^{-3}$. The columns below contain the LBB constants on ($r$-times) refined  patches}
\label{tab:isc-refined}
\begin{tabular}{lcccc}
\toprule
\multicolumn{1}{c}{$r$} 
& \multicolumn{1}{c}{Fig.\,\ref{fig:refinedPatches}\,(left)}
& \multicolumn{1}{c}{Fig.\,\ref{fig:refinedPatches}\,(centre)} 
& \multicolumn{1}{c}{Fig.\,\ref{fig:refinedPatches}\,(right)} 
\\
\otoprule
$1$  &    $0.2075$ &  $0.4693$ &  $0.3843$ \\
$2$  &    $0.2075$ &  $0.4693$ &  $0.3205$ \\
$3$  &    $0.2075$ &  $0.4693$ &  $0.2503$ \\
$4$  &    $0.2075$ &  $0.4693$ &  $0.1919$ \\
\bottomrule
\end{tabular}
\end{table}

\section{Proofs on rectangular meshes}\label{proof-quads}

We start with a preliminary result. This slight generalisation of   \cite[Theorem 3.89]{VJ2016} will be the main tool we will base our
proof of stability on.

\begin{lemma}
  \label{lem:GVerfuerth}
  Let us suppose there exists a space $B\subset L^{2}(\Omega)$, such that
\begin{equation}
  \sup_{\bv\in\VP} \frac{\Lprod{q}{\Div\bv}_\Omega}{\Hseminorm{\bv}{\Omega}} \geq \beta_1 \Lnorm{q}{\Omega} 
 \spacedtext{for all} q\in B \,,
  \label{isc:assu2}
\end{equation}
and a projection $\Pi_B\colon\MP\to B$ such that
\begin{equation}
  \sup_{\bv\in\VP} \frac{\Lprod{q}{\Div\bv}_\Omega}{\Hseminorm{\bv}{\Omega}}
 \geq \beta_2 \Lnorm{q-\Pi_Bq}{\Omega} 
 \spacedtext{for all} q\in\MP \,,
  \label{isc:assu1}
\end{equation}
where $\beta_1,\beta_2$ are positive constants.
Then  $\VP\times\MP$ is inf-sup stable with inf-sup constant $\beta_0:=\frac{\beta_1\beta_2}{1+\beta_1+\beta_2}$, this
is
\[
  \sup_{\bv\in\VP} \frac{\Lprod{q}{\Div\bv}_\Omega}{\Hseminorm{\bv}{\Omega}}
 \geq \beta_0 \Lnorm{q}{\Omega} 
 \spacedtext{for all} q\in\MP \,.
\]
\end{lemma}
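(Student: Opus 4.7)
The plan is to carry out Verf\"urth's splitting trick in its clean algebraic form. Given $q\in\MP$, I would set $\widetilde q \bydef \Pi_B q \in B$ and $q^\perp \bydef q - \Pi_B q$, with $a \bydef \Lnorm{\widetilde q}{\Omega}$ and $b \bydef \Lnorm{q^\perp}{\Omega}$; by the triangle inequality $\Lnorm{q}{\Omega}\le a+b$. The idea is to manufacture a single $\bv\in\VP$ as a weighted combination of two candidates produced by the two hypotheses, with the weight tuned so as to control the cross term left by the non-orthogonal projection.

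Hypothesis \eqref{isc:assu2} applied to $\widetilde q$ furnishes, for any $\epsilon>0$, some $\bv_1\in\VP$ with $\Hseminorm{\bv_1}{\Omega}=1$ and $\Lprod{\widetilde q}{\Div\bv_1}_\Omega \ge (\beta_1-\epsilon)\,a$, and \eqref{isc:assu1} applied to $q$ furnishes $\bv_2\in\VP$ with $\Hseminorm{\bv_2}{\Omega}=1$ and $\Lprod{q}{\Div\bv_2}_\Omega \ge (\beta_2-\epsilon)\,b$. For some $\alpha>0$ to be chosen, setting $\bv = \alpha\bv_1 + \bv_2$ and expanding yields
\[
 \Lprod{q}{\Div\bv}_\Omega = \alpha\Lprod{\widetilde q}{\Div\bv_1}_\Omega + \alpha\Lprod{q^\perp}{\Div\bv_1}_\Omega + \Lprod{q}{\Div\bv_2}_\Omega \,,
\]
and bounding the cross term by Cauchy--Schwarz together with $\Lnorm{\Div\bv_1}{\Omega}\le \Hseminorm{\bv_1}{\Omega}=1$ gives $\Lprod{q}{\Div\bv}_\Omega \ge \alpha\beta_1 a + (\beta_2 - \alpha)b$, up to terms of order $\epsilon$.

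The key algebraic step is now to balance the coefficients of $a$ and $b$: setting $\alpha\beta_1 = \beta_2-\alpha$ pins down $\alpha = \beta_2/(1+\beta_1)$, under which both coefficients become $\beta_1\beta_2/(1+\beta_1)$, so $\Lprod{q}{\Div\bv}_\Omega \ge \tfrac{\beta_1\beta_2}{1+\beta_1}(a+b) \ge \tfrac{\beta_1\beta_2}{1+\beta_1}\Lnorm{q}{\Omega}$. Since $\Hseminorm{\bv}{\Omega}\le \alpha+1 = (1+\beta_1+\beta_2)/(1+\beta_1)$, dividing and sending $\epsilon\to 0$ yields exactly the claimed constant $\beta_0 = \beta_1\beta_2/(1+\beta_1+\beta_2)$.

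The main obstacle, and the only place where any real care is required, is the cross term $\Lprod{q^\perp}{\Div\bv_1}_\Omega$: it is what prevents the two hypotheses from combining additively, and what forces the tilted weighting. The delicate point is that the loss $-\alpha b$ it imposes has to be compensated by the positive contribution $\beta_2 b$ coming from \eqref{isc:assu1}, and the balance condition $\alpha\beta_1 = \beta_2 - \alpha$ is the unique choice simultaneously extracting from the lower bound the maximum possible multiple of $a+b$.
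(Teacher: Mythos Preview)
Your proof is correct and is essentially the same Verf\"urth-type argument as the paper's. The only cosmetic difference is that the paper works directly at the level of the supremum---deriving $S\ge\beta_1\Lnorm{\Pi_Bq}{\Omega}-\Lnorm{q-\Pi_Bq}{\Omega}$ from \eqref{isc:assu2} and then adding $(\beta_1+1)/\beta_2$ times \eqref{isc:assu1}---whereas you build an explicit test function $\alpha\bv_1+\bv_2$ from near-optimizers; the underlying algebra, the balancing weight, and the resulting constant are identical, and the paper's version simply sidesteps the $\epsilon$-approximation.
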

\begin{proof}
  Let $q\in\MP$.
  Then, using \eqref{isc:assu2} we get
  \[
	\begin{aligned}
	  \sup_{\bv\in\VP} \frac{\Lprod{q}{\Div\bv}_\Omega}{\Hseminorm{\bv}{\Omega}}
	  &= \sup_{\bv\in\VP} \set{
		\frac{\Lprod{\Pi_Bq}{\Div\bv}_\Omega}{\Hseminorm{\bv}{\Omega}}
		+
		\frac{\Lprod{q-\Pi_Bq}{\Div\bv}_\Omega}{\Hseminorm{\bv}{\Omega}}
	  }\\
	  &\geq
	  \beta_1\Lnorm{\Pi_B q}{\Omega} - \Lnorm{q-\Pi_Bq}{\Omega} \,\deeper.
	\end{aligned}
  \]
Multiplying \eqref{isc:assu1} by $(\beta_1+1)/ \beta_2$ and adding it to the last inequality  yields
\[
  \Big(1 + \frac{\beta_1+1}{\beta_2}\Big)
  \sup_{\bv\in\VP}\frac{\Lprod{q}{\Div\bv}_\Omega}{\Hseminorm{\bv}{\Omega}} \geq \beta_1\big(\Lnorm{\Pi_Bq}{\Omega} +\Lnorm{q-\Pi_B q}{\Omega} \big) \geq \beta_1\Lnorm{q}{\Omega} \,\deeper,
\]
as required.
\end{proof}

\subsection{Proof of  Theorem~\ref{thm:THEPs}}
Our aim is to prove  the assumptions of Lemma~\ref{lem:GVerfuerth}. For simplicity of the presentation
we will consider a local coordinate system, and will consider $\omega=[-H,H]\times [-h,h+2H]$ to be 
partitioned into an edge patch as shown in Figure~\ref{fig:refinedEPs}.
We will consider $\omega=M\cup M'$, where the ``bottom'' ${M= [-H,H]\times [-h,h]}$ is divided into $2$ anisotropic 
elements, as depicted in Figure~\ref{fig:EPflatpart} (left), or $4$ anisotropic elements, as in Figure~\ref{fig:EPflatpart} (right), respectively. On the other hand, the ``top'' ${M'=[-H,H]\times [h,h+2H]}$ is divided into $2$ shape-regular elements.

\begin{figure}[htbp]
  \centering
	\includegraphics{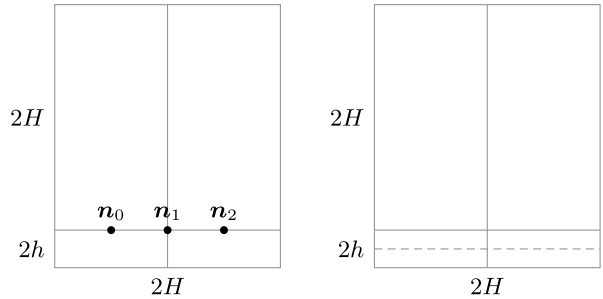}
	\caption{Edge patches in the local coordinate system.}
  \label{fig:refinedEPs}
\end{figure}

For further use we  define the following linearly independent functions 
\begin{gather}
  \phi_0 \bydef \chi_{M}^{} - \frac{\meas{M}}{\meas{M'}} \chi_{M'}^{}\,\deep, \nonumber
  \\
  \phi_{1,M}(x,y) \bydef \frac{x}{H}\chi_{M}^{} (x,y)
\spacedtext{and}
\phi_{2,M}(x,y) \bydef \Big(1 -\frac{2\abs{x}}{H} \Big)\chi_{M}^{}(x,y) \,\deep\,. \label{phi1-2}
\end{gather}

The pressure space $\MP(\omega)$  is included in $\widetilde{\MP}:=\MP(M')\oplus\MP(M)\oplus\mathrm{span}\set{\phi_0}$.
We  will prove the inf-sup condition between  this larger pressure space and $\VP(\omega)$.
As a consequence, the Taylor--Hood pair will be uniformly stable on $\omega$.
As a first step, in the next result we state a  decomposition of the  space $\MP(M)$.

\begin{figure}[bhtp]
\centering
\includegraphics{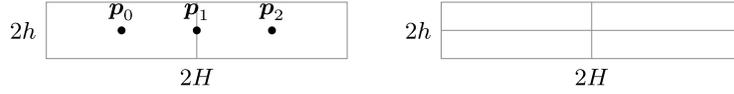}
\caption{Flat parts of edge patches.}
\label{fig:EPflatpart}
\end{figure}

\begin{lemma}
  \label{lem:decompFLATME}
  Let $M=[-H,H]\times[-h,h]$ be partitioned as in Figure~\ref{fig:EPflatpart}.
  Then, $\MP(M)$ can be decomposed as  ${\MP(M) = B_M\oplus G_M}$, where
  \[
	B_M \bydef \set{q\in\MP(M)\colon \partial_yq=0 \text{ in } M } = \mathrm{span}\set{\phi_{1,M}, \,\phi_{2,M}}\,,
\]
and
\[
G_M \bydef 
\set{q\in\MP(M)\colon \Lprod{q}{q_s}_M =0  \text{ for all } q_s\in B_M} \,.
\]
Moreover, for all $v\in H^1_0(M)$ the following holds
\begin{equation}
  \Lprod{\partial_y v}{q}_M = 0 \spacedtext{for all} q\in B_M \,,
  \label{eq:d2vbotqs}
\end{equation}
and, for every $q\in G_M$ there exists   $(0,w)\in \VP(M)\cap \b{H}^1_0(M)$ such that 
\begin{equation}
  \Lprod{\partial_y w}{q}_M = \Lnorm{q}{M}^2
  \spacedtext{and}
  \Hseminorm{w}{M} \leq C_1\Lnorm{q}{M}  \,\deeper,
  \label{eq:iscGM}
\end{equation}
where $C_1$ is independent of the size and shape of the elements in $\PO$.
\end{lemma}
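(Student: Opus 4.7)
I would first verify the decomposition: if $q\in\MP(M)$ satisfies $\partial_y q = 0$, then on each cell a $\mathbb{Q}_1$ polynomial annihilated by $\partial_y$ reduces to a linear function of $x$ only. A direct dimension count on the meshes of Figure~\ref{fig:EPflatpart}, using the continuity at the interior interfaces and the zero-mean constraint, yields a two-dimensional space, which is easily identified with $\mathrm{span}\{\phi_{1,M},\phi_{2,M}\}$ (both functions being checked to belong to $\MP(M)$). Defining $G_M$ as the $L^2(M)$-orthogonal complement of $B_M$ inside $\MP(M)$ then gives the direct sum. The identity \eqref{eq:d2vbotqs} follows by Fubini: $\Lprod{q}{\partial_y v}_M=\int_{-H}^{H} q(x)\bigl[v(x,h)-v(x,-h)\bigr]\,dx=0$ since $v$ vanishes on the top and bottom of $M$.

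The heart of \eqref{eq:iscGM} is the \emph{vanishing column-integral} property, which I would establish first: for every $q\in G_M$,
\[
Q(x):=\int_{-h}^{h} q(x,y)\,dy\equiv 0 \quad \text{on } [-H,H].
\]
Integration in $y$ eliminates the $y$ and $xy$ monomials of the $\mathbb{Q}_1$ polynomial on each cell, so $Q$ is continuous and piecewise affine in $x$ on the projected 1D mesh (continuity at the interior vertical interface follows from that of $q$). Testing $q$ against the constant $1$ (zero mean) and against $\phi_{1,M},\phi_{2,M}$ (orthogonality to $B_M$) makes $Q$ orthogonal in $L^2(-H,H)$ to a basis of that three-dimensional space of continuous piecewise affines, forcing $Q\equiv 0$. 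In the two-element case this means $q|_{K_i}(x,y)=b_i(x)\,y$ with $b_i\in\mathbb{P}_1$ and $b_1(0)=b_2(0)$; the four-element case is analogous, with a similar reduction of degrees of freedom.

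I would then construct the velocity component explicitly by setting $\tilde w(x,y):=\int_{-h}^{y} q(x,s)\,ds$. This function is polynomial of degree $\le 1$ in $x$ and $\le 2$ in $y$ on each cell, hence in $\mathbb{Q}_2$ piecewise; it is continuous across interfaces, satisfies $\partial_y\tilde w=q$ pointwise, and vanishes on $y=\pm h$ by the property $Q\equiv 0$. The only boundary nodes at which it may still be nonzero are the midpoints $(\pm H,0)$, since on the left and right edges the restriction $\tilde w(\pm H,y)=b(\pm H)(y^{2}-h^{2})/2$ already vanishes at the corners. Subtracting the $\mathbb{Q}_{2}$ nodal correction $r$ that equals $\tilde w$ at $(\pm H,0)$ and zero at all other nodes, I set $w:=\lambda(\tilde w-r)$. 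Then $w\in \VP(M)\cap\b{H}^{1}_{0}(M)$ and
\[
\Lprod{\partial_y(\tilde w-r)}{q}_M=\Lnorm{q}{M}^{2}-\Lprod{\partial_y r}{q}_M;
\]
a short quadratic-form comparison in the nodal values of $b$ identifies the last term as strictly dominated by $\Lnorm{q}{M}^{2}$, so the right-hand side is bounded below by $c_{0}\Lnorm{q}{M}^{2}$ and $\lambda$ can be chosen to yield the required equality.

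The estimate $\Hseminorm{w}{M}\le C_1\Lnorm{q}{M}$, with $C_1$ independent of $H$ and $h$, is the principal technical obstacle. For $\tilde w$ it rests on $\partial_y\tilde w=q$ together with the anisotropic inverse inequality $\Lnorm{\partial_x q}{K}\le C H^{-1}\Lnorm{q}{K}$ and the elementary bound $\Lnorm{\partial_x \tilde w}{M}\le 2h\,\Lnorm{\partial_x q}{M}$. For the correction $r$, the nodal values $\tilde w(\pm H,0)=-b(\pm H)h^{2}/2$ carry a factor of $h^{2}$, while the $H^{1}$-seminorm of the $\mathbb{Q}_{2}$ nodal basis at $(\pm H,0)$ scales like $\sqrt{H/h}$; a trace inequality $b(\pm H)^{2}\le C H^{-1}\Lnorm{b}{(-H,H)}^{2}$ on the piecewise linear $b$ then produces the required anisotropy-free bound. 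The crucial mechanism is that $Q\equiv 0$ leaves only two boundary nodes to correct, and the resulting factors of $h^{2}$ balance exactly the $H/h$ blow-up of the slim $\mathbb{Q}_{2}$ basis functions, a cancellation that would fail if more nodes had to be touched by $r$.
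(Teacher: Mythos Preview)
Your argument is essentially correct (at least for the two–cell case, which you work out in detail), but it follows a genuinely different route from the paper. The paper never introduces the column integral $Q(x)$ or the antiderivative $\tilde w$. Instead it writes down an explicit $L^2$-orthogonal basis $\{\phi_3,\phi_4,\phi_5\}$ of $G_M$ (with $\phi_3=-y/h$, $\phi_4=\sqrt3\,\phi_1\phi_3$, $\phi_5=\sqrt3\,\phi_2\phi_3$), picks the three interior $\mathbb{Q}_2$ bubbles $b_0,b_1,b_2$, forms the combinations $v_3=b_0+b_2$, $v_4=b_2-b_0$, $v_5=b_1-\tfrac14(b_0+b_2)$, and checks by direct quadrature that the $3\times3$ matrix $\bigl(\Lprod{\partial_y v_i}{\phi_j}_M\bigr)_{i,j}$ is diagonal. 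A suitable choice of scalars $\alpha_i$ then gives $\Lprod{\partial_y v^\star}{g}_M=\Lnorm{g}{M}^2$ \emph{exactly}, and the seminorm bound follows from $|\alpha_i|\le\sqrt3\,h$ together with $\Hseminorm{v_i}{M}^2\le Ch^{-2}|M|$. The four–cell case is handled by extending the basis with three even-in-$y$ functions $\phi_6,\phi_7,\phi_8$ and reusing the two–cell computation via a change of variables.

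Your construction trades this algebraic bookkeeping for an antiderivative-plus-correction argument: $\partial_y\tilde w=q$ holds pointwise, but $\tilde w\notin H^1_0(M)$, so you subtract a nodal correction $r$ and then argue that the perturbation $(\partial_y r,q)_M$ is strictly dominated by $\Lnorm{q}{M}^2$ (indeed, one computes the ratio never exceeds $2/3$ in the two–cell case). The advantage of the paper's approach is that the bubbles $b_i$ are in $H^1_0(M)$ from the start, so no correction and no domination argument are needed, and the constant $C_1=3\sqrt{\tilde C}$ is tracked explicitly through both cases (which the authors later use to support Remark~\ref{remark1}). The advantage of your approach is that the key structural fact---$Q\equiv0$ characterises $G_M$---is isolated cleanly, and the scaling reasoning you give (the $h^2$ in the boundary values cancels the $\sqrt{H/h}$ of the thin basis functions) makes the aspect-ratio independence transparent. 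For the four–cell case your sketch is thinner: $q$ no longer factors as $b(x)\,y$, and there are now six boundary nodes to correct rather than two, so the ``short quadratic-form comparison'' becomes a $6\times6$ positive-definiteness check; it still goes through by a reference-element argument, but you should say so explicitly rather than leaving it at ``analogous''.
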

\begin{proof}
As it is clear from the context, we will omit the subscript $M$ in this proof.
The  orthogonality \eqref{eq:d2vbotqs} follows using  integration by parts.
  Hence, we only need to prove \eqref{eq:iscGM}.
We split the remainder of the proof in two cases.

\noindent\underline{Case 1 :}
$M =[-H,H]\times[-h,h] $ is split at $x=0$ into two anisotropic cells belonging to $\PO$, see Figure~\ref{fig:EPflatpart}(left).
First, we extend the set $\set{\phi_{1},\phi_{2}}$ to a basis of $\MP(M) = \mathrm{span}\set{\phi_1,\phi_2,\phi_3,\phi_4,\phi_5}$ where $\phi_1,\phi_2$ are defined as  in \eqref{phi1-2}, and 
\[
  \phi_3(x,y) \bydef -\frac{y}{h}
  \;,\quad
  \phi_4 \bydef \sqrt{3}\phi_1 \phi_3
  \spacedtext{and} \phi_5 \bydef \sqrt{3}\phi_2 \phi_3 \,.
\]
A direct computation, using Fubini's Theorem and the fact that  each of the functions $\set{\phi_1,\phi_2,\phi_3}$ has zero average, yields  the orthogonalities 
\[
  \Lprod{\phi_i}{\phi_j}_M = \delta_{ij}\Lnorm{\phi_i}{M}^2=\delta_{ij}\frac{|M|}{3}\quad\textrm{for} \; i,j\in\set{1,\ldots,5}\,,
\]
and then $G_M = \mathrm{span}\,\set{\phi_3, \phi_4, \phi_5}$, as $\phi_1,\phi_2\in B_M$.
To prove \eqref{eq:iscGM}, let $g_1\in G_M$, that is,  $g_1=\sum_{i=3}^{5} q_i\phi_i$,
for some real coefficients $q_3,q_4, q_5$, 
and $\Lnorm{g_1}{M}^2 = \frac{1}{3}  \meas{M}\sum_{i=3}^{5}q_i^2$.
Let $ \boldsymbol{p}_0, \boldsymbol{p}_1,  \boldsymbol{p}_2 \in \mathring{M}$  be the locations of the degrees of freedom (dof) of $\VP(\omega)$ in $\mathring{M}$ (see Figure~\ref{fig:EPflatpart}, left),
and let $b_0, b_1, b_2\in H^1_0(M)$ be the piecewise $\mathbb{Q}_2$ bubble functions, such that $b_i(\boldsymbol{p}_j) = \delta_{ij}$ and   $(0,b_i)\in\VP(\omega)$.
Let, in addition, 
$v_3 \bydef b_2 + b_0$, $v_4 \bydef b_2-b_0$ and $v_5 \bydef b_1 -\frac{1}{4}(b_0+b_2)$.
A direct calculation using Fubini's theorem gives
\begin{equation}
  \begin{aligned}
  \Lprod{\partial_y v_3}{\phi_3}_M &
  = \frac{16H}{9}
  \,, \\[1ex]
  \Lprod{\partial_y v_3}{\phi_4}_M &=0  \,, \\[1ex]
  \Lprod{\partial_y v_3}{\phi_5}_M &=0  \,,
  \end{aligned}
  \qquad
  \begin{aligned}
	\Lprod{\partial_y v_4}{\phi_3}_M &=0  \,, \\[1ex]
  \Lprod{\partial_y v_4}{\phi_4}_M &
  = \frac{8H}{3\sqrt{3}}   
  \,, \\[1ex]
  \Lprod{\partial_y v_4}{\phi_5}_M &=0  \,,
  \end{aligned}
  \qquad
  \begin{aligned}
	\Lprod{\partial_y v_5}{\phi_3}_M &=0  \,, \\[1ex]
	\Lprod{\partial_y v_5}{\phi_4}_M &=0  \,, \\[1ex]
	\Lprod{\partial_y v_5}{\phi_5}_M &= \frac{4H}{3\sqrt{3}}\,.
  \end{aligned}
  \label{eq:dyv2x1ME}
\end{equation}
Hence,  defining $v^\star = \alpha_3 q_3 v_3 + \alpha_4 q_4 v_4 + \alpha_5 q_5 v_5$ 
  with $\alpha_3 = \frac{3h}{4}, \alpha_4 =\frac{\sqrt{3} h}{2}$ and $\alpha_5 =\sqrt{3}h$, 
we obtain
\begin{equation}\label{star-star}
  \Lprod{\partial_y v^\star}{g_1}_M = \sum_{i=3}^{5}\Lnorm{q_i\phi_i}{M}^2 = \Lnorm{g_1}{M}^2 \,\deeper.
\end{equation}
Finally,  since $\abs{\alpha_i} \leq \sqrt{3} h$ and
$\Hseminorm{v_i}{M}^2\leq  \tilde{C} h^{-2}\meas{M}/3$,  we get
\begin{equation}\label{square}
  \Hseminorm{v^\star}{M}^2
  \leq 3\sum_{i=3}^{5}\Hseminorm{\alpha_i q_iv_i}{M}^2
  \leq 3\tilde{C} \sum_{i=3}^{5} q_i^2 h^2 \left( h^{-2}\meas{M} \right)
  = 9\tilde{C} 
        \sum_{i=3}^{5} \Lnorm{q_i\phi_i}{M}^2
		\,\deeper,
\end{equation}
which, since  $(0, v^\star)\in\VP(M)$, finishes the first case with $C_1=3\sqrt{\tilde{C}}$.

\noindent\underline{Case 2 :}
We now consider the case where $M\bydef[-H,H]\times[-h,h]$ 
is divided into four rectangles $K\in\PO$ obtained by splitting $M$ along the lines $x=0$ and $y=0$,
as depicted in Figure~\ref{fig:EPflatpart} (right).
We extend the set $\set{\phi_1,\phi_2, \phi_3,\phi_4,\phi_5}$, introduced in the proof of the first case, 
to a basis of $\MP(M)$ by adding  the following functions
\[
  \phi_6(x,y) \bydef  \begin{cases}
	\phi_3 (x,2y-h) &\text{, if } y>0\,,\\
	-\phi_3 (x,2y+h) &\text{, if } y\leq0\,.
  \end{cases}
  \;,\quad
  \phi_7 \bydef \sqrt{3}\phi_1 \phi_6
  \;,\;
  \phi_8 \bydef \sqrt{3}\phi_2 \phi_6 \,.
\]
These functions are even in $y$, and then, proceeding as before we get
\[
  \Lprod{\phi_i}{\phi_j}_M = \delta_{ij}\Lnorm{\phi_i}{M}^2 = \delta_{ij}\frac{\meas{M}}{3} \spacedtext{for} i,j\in\set{1,\ldots,8}.
\]

Let now $\tilde{g}=g_1+g_2\in G_M$, where $g_1=\sum_{i=3}^5q_i\phi_i$, and 
 $g_2= \sum_{i=6}^{8}q_i\phi_i$, for some real coefficients $q_3,\ldots, q_8$. Thanks to the definition of the function $\phi_6$, we observe
that $g_2$ can be expressed as follows
\[
  g_2(x,y)= 
  \begin{cases}
	(q_6\phi_3 +q_7\phi_4 +q_8\phi_5)(x,2y-h) &\text{, if } y>0\,,\\
	-(q_6\phi_3 +q_7\phi_4 +q_8\phi_5)(x,2y+h) &\text{, if } y\leq0\,.
  \end{cases}
\]
Then, using the function $v^\star$ from the previous case, we consider the function $\tilde{v}:=v^\star+v_2^\star$, where
\[
  v_2^\star (x,y)\bydef
  \begin{cases}
	\left(\alpha_6q_6v_3+\alpha_7q_7v_4+\alpha_8q_8v_5\right)(x,2y-h) &\text{, if }y>0\,,\\
	-\left(\alpha_6q_6v_3+\alpha_7q_7v_4+\alpha_8q_8v_5\right)(x,2y+h) &\text{, if }y\leq0\,,\\
  \end{cases}
\]
where $\alpha_i, i=6,7,8$ are geometric constants to be determined.
We note that $(0,v_2^\star)\in\VP(M)$.

Now, we prove $\Lprod{\partial_y (v^\star + v_2^\star)}{g_1 + g_2}_M = \Lnorm{\tilde{g}}{M}^2$. 
To this end, we recall \eqref{star-star} 
and since $g_2$ and $\partial_y v_2^\star$ are even in $y$, and $g_1$ and $\partial_y v^\star$ are odd in $y$, we obtain the orthogonalities
\[
  \Lprod{\partial_y v_2^\star}{g_1}_M =0
  \spacedtext{and}
  \Lprod{\partial_y v^\star}{g_2}_M =0
  \,.
\]
In addition,  integrating by substitution and applying \eqref{eq:dyv2x1ME} gives
\[
\Lprod{\partial_y v_2^\star}{g_2}_M 
= 2\int_0^h\int_{-H}^H \partial_y v_2^\star g_2 \,\mathrm{d}x\,\mathrm{d}y
= 2\sum_{i=6}^{8} q_i^2\, \alpha_i\Lprod{\partial_y v_{i-3}}{\phi_{i-3}}_{M} \,\deeper,
\]
which on choosing 
$\alpha_i \bydef \alpha_{i-3}/2$\, ($i=6,7,8$) 
gives
\[
  \Lprod{\partial_y (v^\star + v_2^\star)}{\tilde{g}}_M = 
  \sum_{i=3}^{5}\Lnorm{q_i\phi_{i}}{M}^2 + \sum_{i=6}^{8}\Lnorm{q_i\phi_{i-3}}{M}^2 =\Lnorm{\tilde{g}}{M}^2 \,\deeper.
\]

Finally, we prove $\Hseminorm{v}{M}\leq C_1\Lnorm{q}{M}$.
First, applying Cauchy's inequality and considering the scaling (w.r.t.\ $y$) inside $v_2^\star$ as well as $\alpha_i = \alpha_{i-3}/2$, we get
\[
  \Hseminorm{v_2^\star}{M}^2
  \leq 3 \bigg( 4 \sum_{i=6}^8 \Hseminorm{\alpha_iq_iv_{i-3}}{M}^2 \bigg)
  = 3 \bigg(  \sum_{i=6}^8 q_i^2\Hseminorm{\alpha_{i-3}v_{i-3}}{M}^2 \bigg) \,.
\]
Now, using $\Hseminorm{\alpha_iv_i}{M}^2\leq \tilde{C}\meas{M} = 3\tilde{C}\Lnorm{\phi_i}{M}^2$ (as in Case~1) we get
\[
\Hseminorm{v^\star_2}{M}^2\leq  3\tilde{C}  \sum_{i=6}^{8}\Lnorm{q_i\phi_{i-3}}{M}^2 
= 9\tilde{C} \Lnorm{g_2}{M}^2\,,
\]
where $\tilde{C}$ is the same constant as in \eqref{square}.
Finally, recalling that $v^\star$ is even in $y$, and
$v_2^\star$ is odd in $y$, we get $\Lprod{\nabla v^\star}{\nabla v_2^\star}_M=0$ and arrive at
\begin{equation}\label{18+1}
  \begin{aligned}
\Hseminorm{v^\star+v_2^\star}{M}^2 
&= \Hseminorm{v^\star}{M}^2 + \Hseminorm{v_2^\star}{M}^2
\\
&\leq
 9\tilde{C} \big(\Lnorm{g_1}{M}^2 + \Lnorm{g_2}{M}^2\big)
= 9\tilde{C} \Lnorm{\tilde{g}}{M}^2 \,\deeper,
  \end{aligned}
\end{equation}
which finishes the proof in this case, again with the same constant $C_1=3\sqrt{\tilde{C}}$.
\end{proof}

Now we rewrite $\widetilde{\MP} =\MP(M')\oplus\MP(M)\oplus\mathrm{span}\set{\phi_0} = G_\omega\oplus B_\omega$
where
\begin{equation}
  G_\omega \bydef \MP(M')\oplus G_M
  \spacedtext{and}
  B_\omega \bydef B_M \oplus \mathrm{span}\set{\phi_0} \,,
  \label{def:Bomega}
\end{equation}
with  $G_M$ and $B_M$  defined as in Lemma~\ref{lem:decompFLATME}.
Let 
   \begin{equation}\label{17-2}
	 \VX \bydef \VP(M')\oplus\set{(0,v)\in\VP(M)} \subset \VP(\omega)\,.
\end{equation}
	Then, using  Lemma~\ref{lem:decompFLATME} and the stability of the Taylor--Hood pair on $M'$
(which is due to $M'$ being partitioned in a shape regular way),
we conclude the uniform inf-sup condition
\begin{equation}
  \inf_{q\in G_\omega}\sup_{\bv\in\VX} \frac{\Lprod{\Div\bv}{q}_\omega}{\Lnorm{q}{\omega}\Hseminorm{\bv}{\omega}} 
  \geq \beta_2 \,,
  \label{isc:VPlocGomega}
\end{equation}
where $\beta_2$ is independent of the aspect ratio, \emph{i.e.}\ the quotient $h/H$.
From \eqref{isc:VPlocGomega} it follows that $G_\omega$ is controlled by a subset of $\VP(\omega)$ that vanishes on $\gamma:=M\cap M'$. To
control the remaining part of $\widetilde{\MP}$ we need the bubble functions connecting $M$ and $M'$. 
This is stated in the next result.

\begin{lemma}
  \label{lem:uiscBOmega2}
 Let $B_\omega$ be defined as in \eqref{def:Bomega}.
  Then, there exists a constant $\beta_1>0$ independent of $h$ and $H$, such that
  \[
	\sup_{\bv\in\VP(\omega)} \frac{\Lprod{\Div\bv}{q}_\omega}{\Hseminorm{\bv}{\omega}} \geq \beta_1 \Lnorm{q}{\omega} 
	\spacedtext{for all} q\in B_\omega \,.
  \]
\end{lemma}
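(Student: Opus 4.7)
The plan is a Fortin-type construction: for each of the three generators of the 3-dimensional space $B_\omega = \mathrm{span}\{\phi_0,\phi_{1,M},\phi_{2,M}\}$ I will exhibit a test velocity in $\VP(\omega)$ whose divergence couples with precisely that generator, with uniform estimates in the aspect ratio. A useful preliminary observation is that the three basis functions are mutually $L^2(\omega)$-orthogonal: $\phi_{1,M}$ is odd in $x$ whereas $\phi_0$ and $\phi_{2,M}$ are even, and $\phi_0$ is constant on $M$ while $\phi_{2,M}$ has zero mean on $M$, so $\Lprod{\phi_0}{\phi_{2,M}}_\omega = 0$. A direct computation also gives $\Lnorm{\phi_i}{\omega}^2 \sim hH$ for each $i$. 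Hypothesis~\ref{hyp_GoodEdgePatch} is essential: it places a vertex at the midpoint $(0,h)$ of $\gamma:=M\cap M'$ and midside nodes at $(\pm H/2,h)$, so any piecewise-$\mathbb{Q}_2$ profile on $\gamma$ vanishing at $x=\pm H$ (either even or odd in $x$) is admissible as a trace.

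Every test velocity will take the separable form $\bv_i = (0,\hat w_i(x)\,g(y))$, with $\hat w_i$ a piecewise-$\mathbb{Q}_2$ profile on $\gamma$ vanishing at $x=\pm H$ and $g$ the piecewise-linear profile equal to $1$ on $\gamma$ and to $0$ on $y=-h$ and on the top edge of $M'$. Because $\hat w_i$ and $g$ vanish on the relevant boundary portions, integration by parts in $y$ reduces every pairing to a one-dimensional integral on $\gamma$,
\begin{equation*}
\Lprod{\Div\bv_i}{\phi_{j,M}}_\omega = \int_\gamma \hat w_i\,\phi_{j,M}|_\gamma\,\mathrm{d}x \quad (j=1,2), \qquad
\Lprod{\Div\bv_i}{\phi_0}_\omega = \Big(1+\tfrac{|M|}{|M'|}\Big)\int_\gamma \hat w_i\,\mathrm{d}x.
\end{equation*}
The three choices that diagonalise this pairing are: (i) $\hat w_1$ odd in $x$ with $\hat w_1(\pm H/2)=\pm 1$, which annihilates the even $\phi_0$ and $\phi_{2,M}$ by parity and gives $\int_\gamma \hat w_1(x/H)\,\mathrm{d}x = 2H/3$; (ii) $\hat w_2\equiv \phi_{2,M}|_\gamma = 1-2|x|/H$, which has zero mean on $\gamma$ (killing $\phi_0$), is orthogonal to $\phi_{1,M}$ by parity, and satisfies $\int_\gamma \hat w_2^2\,\mathrm{d}x = 2H/3$; and (iii) $\hat w_0$ even in $x$ with $\hat w_0(0)=0$ and $\hat w_0(\pm H/2)=1$, for which an elementary computation yields $\int_\gamma \hat w_0\,(1-2|x|/H)\,\mathrm{d}x = 0$ (decoupling from $\phi_{2,M}$), while $\int_\gamma \hat w_0\,\mathrm{d}x = 4H/3$ produces the nontrivial pairing with $\phi_0$.

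A standard scaling estimate, dominated by $\partial_y w_i \sim 1/h$ across the thin dimension of $M$, gives $|\bv_i|_{1,\omega}^2 \sim H/h$, while the diagonal couplings are all $\gtrsim H$ and every off-diagonal coupling vanishes exactly. Hence, for $q=\sum_{i} c_i\phi_i\in B_\omega$, the choice $\bv_q = \sum_i c_i\bv_i$ yields
\begin{equation*}
\Lprod{\Div \bv_q}{q}_\omega \gtrsim H\sum_i c_i^2, \qquad |\bv_q|_{1,\omega}^2 \lesssim \tfrac{H}{h}\sum_i c_i^2, \qquad \Lnorm{q}{\omega}^2 \sim hH\sum_i c_i^2,
\end{equation*}
so that the aspect ratio cancels in the quotient and the stated inf-sup bound follows with $\beta_1$ independent of $h$ and $H$. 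The crux is the decoupling in step (iii): without the midpoint and midside nodes on $\gamma$ guaranteed by Hypothesis~\ref{hyp_GoodEdgePatch}, the even-in-$x$ sector spanned by $\phi_0$ and $\phi_{2,M}$ cannot be uniformly diagonalised, and the constant $\beta_1$ degrades with $h/H$. This is precisely the mechanism behind the numerical instabilities observed in \S\,\ref{numerics-section} for patches that violate the hypothesis.
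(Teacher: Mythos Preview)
Your strategy is the same as the paper's: build test velocities of the separable form $(0,\hat w_i(x)g(y))$ with $g$ piecewise affine in $y$, reduce via integration by parts to line integrals on $\gamma$, and pick three profiles that diagonalise the $3\times 3$ pairing. Two of your profiles coincide with the paper's choices (your $\hat w_1$ is its $w_1=f_2-f_0$, your $\hat w_0$ is its $w_0=f_0+f_2$).

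There is, however, one genuine gap. Your choice in (ii), $\hat w_2 = \phi_{2,M}|_\gamma = 1-2|x|/H$, does \emph{not} vanish at $x=\pm H$: its value there is $-1$. Hence $(0,\hat w_2(x)g(y))\notin \b{H}^1_0(\omega)$ and is not an admissible element of $\VP(\omega)$, contradicting the requirement you yourself state a few lines earlier. You need a replacement that keeps the zero mean on $\gamma$ and the parity orthogonality to $\phi_{1,M}$ while vanishing at the endpoints. The paper accomplishes this with $w_2 = f_1 - \tfrac14(f_0+f_2)$, i.e.\ the even piecewise quadratic taking the values $1,-\tfrac14,0$ at $0,\pm H/2,\pm H$. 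A Simpson-rule check gives $\int_\gamma w_2\,\mathrm{d}x=0$ (so the decoupling from $\phi_0$ survives) and $\int_\gamma w_2\,\phi_{2,M}\,\mathrm{d}x = H/3$, which is the same $O(H)$ scaling as your other two couplings; the rest of your argument then goes through unchanged.

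A minor side remark: your closing diagnosis locates the obstruction in the even sector, but if $\gamma$ were a single edge the unique interior node would sit at $x=0$ and the only admissible profile would be even, so it is actually the odd function $\phi_{1,M}$ that first becomes uncontrollable; moreover $\phi_{2,M}$ would not even lie in $\MP(M)$ in that situation.
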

\begin{proof}
  Let $\boldsymbol{n}_0, \boldsymbol{n}_1, \boldsymbol{n}_2 \in\gamma$ be the locations of the degrees of freedom (dof) of $\VP(\omega)$ on $\gamma$ (see Figure~\ref{fig:refinedEPs}),
  and let $f_0, f_1, f_2\in H^1_0(\omega)$ be the unique piecewise $\mathbb{Q}_2$ functions 
such that $f_i(\boldsymbol{n}_j) = \delta_{ij}$,  $(0,f_i)\in\VP(\omega)$, and $f_i$ are piecewise affine in $y$.
  Let $q\in B_\omega$ be arbitrary, that is,  $q = q_0 \phi_0 + q_1\phi_{1,M} + q_2\phi_{2,M}$ for some $q_0,q_1,q_2\in\RR$.
  Let $w^\star \bydef  \sum_{i=0}^{2}\alpha_iw_i$ where
  $\alpha_i$ are coefficients  to be chosen and 
  $w_0 \bydef f_0 + f_2$, $w_1 \bydef f_2-f_0$ and $w_2 \bydef f_1 -\frac{1}{4}(f_0+f_2)$. 
  Using the fact that  $\gamma$ consists of two edges of equal lengths, a direct computation gives the orthogonalities 
\[
  \begin{aligned}
  \Lprod{w_0}{\phi_{1,M}}_\gamma &=0  \,, \\ 
  \Lprod{w_0}{\phi_{2,M}}_\gamma &=0  \,,
  \end{aligned}
  \qquad
  \begin{aligned}
  \Lprod{w_1}{\jump{\phi_0}}_\gamma &=0  \,, \\ 
  \Lprod{w_1}{\phi_{2,M}}_\gamma &=0  \,,
  \end{aligned}
  \qquad
  \begin{aligned}
	\Lprod{w_2}{\jump{\phi_0}}_\gamma &=0  \,, \\ 
  \Lprod{w_2}{\phi_{1,M}}_\gamma &=0  \,.
  \end{aligned}
\]
 Then, integration by parts  gives
\[
  \begin{aligned}
  \Lprod{\partial_y w^\star}{q}_{\omega} 
  &= \Lprod{w^\star }{\jump{q}}_{\gamma}
  = \alpha_0q_0\Lprod{w_0}{\jump{\phi_0}}_{\gamma} 
  + \sum_{i=1}^{2} \alpha_iq_i\Lprod{w_i}{\phi_{i,M}}_\gamma \,\deeper.
  \end{aligned}
\]
In addition
\[
  \begin{aligned}
	\Lprod{w_1}{\phi_{1,M}}_{\gamma}
	&= \frac{2H}{3} \,,
	\quad
	\Lprod{w_2}{\phi_{2,M}}_{\gamma}
	= \Lprod{f_1}{\phi_{2,M}}_{\gamma}
	%
	= \frac{H}{3}
	\,, \quad\text{and}
	\\
  \Lprod{w_0}{\jump{\phi_0}}_{\gamma}  
  &= \frac{4H}{3}\jump{\phi_0}_\gamma
  \,\deep.
  \end{aligned}
\]
Since $\meas{M}\jump{\phi_0} = \Lnorm{\phi_0}{\omega}^2$
and $\Lnorm{\phi_{i,M}}{\omega}^2 = \Lnorm{\phi_i}{M}^2 = (2H/3)2h$,
we set 
$\alpha_0 \bydef  3h q_0$
$\alpha_1 = 2h q_1$
and
$\alpha_2 = 4h q_2$
to obtain
\[
  \Lprod{\partial_y w^\star}{q}_{\omega}  = \Lnorm{q}{\omega}^2 \,\deeper.
\]
Finally, since $\abs{\alpha_i} \leq 4hq_i$ and
$\Hseminorm{w_i}{\omega}^2\leq 3 \sum_{j=0}^{2}\Hseminorm{f_j}{\omega}^2 
\leq C\left( h^{-2}\meas{M} + H^{-2}\meas{M'}\right)$,  we get
\begin{multline*}
  \Hseminorm{w^\star}{\omega}^2
  \leq 3 \sum_{i=0}^{2}\Hseminorm{\alpha_i w_i}{\omega}^2
  \leq C \sum_{i=0}^{2} q_i^2 h^2 \bigg( \frac{1}{h^2}\meas{M} + \frac{1}{H^2}\meas{M'}\bigg)
  \\
  \leq C \sum_{i=0}^{2} q_i^2 \meas{M} \bigg( 1 + \frac{\meas{M}}{\meas{M'}}\bigg)
  \leq C \sum_{i=0}^{2} \Lnorm{q_i\phi_i}{\omega}^2
  =: \beta_1^{-2}  \Lnorm{q}{\omega}^2 \,\deeper,
\end{multline*}
which finishes the proof since  $(0, w^\star)\in\VP(\omega)$.
\end{proof}

The proof of Theorem~\ref{thm:THEPs} appears then as an application of Lemma~\ref{lem:GVerfuerth}.
In fact, recall the decomposition  $\MP(\omega) \subset \widetilde{\MP}=G_\omega \oplus B_\omega$,
  where $G_\omega, B_\omega$ are defined in \eqref{def:Bomega}.
Let $q\in G_\omega\oplus B_\omega$,
and recall the definition \eqref{17-2} of  $\VX$.
  Let also $\Pi_Bq$ be the $L^2(\omega)$ projection of $q$ onto $B_\omega$.
  Then, from \eqref{eq:d2vbotqs} 
  and the fact that $\Pi_B(q)$ is  constant in  $M'$, we  get
  $\Lprod{\Pi_Bq}{\Div\bv}_\omega =0$ for all $\bv\in\VX$.
  Hence
\[
  \begin{aligned}
  \sup_{\bv\in\VP} \frac{\Lprod{q}{\Div\bv}_\omega}{\Hseminorm{\bv}{\omega}}
  &\geq  \sup_{\bv\in\VX} \frac{\Lprod{q}{\Div\bv}_\omega}{\Hseminorm{\bv}{\omega}}
  \\
  &=  \sup_{\bv\in\VX} \frac{\Lprod{q-\Pi_Bq}{\Div\bv}_\omega}{\Hseminorm{\bv}{\omega}}
  \geq \beta_2 \Lnorm{q-\Pi_Bq}{\omega}\,\deeper,
  \end{aligned}
\]
where we applied $q-\Pi_Bq\in G_\omega$ and \eqref{isc:VPlocGomega}. 
Finally Lemma~\ref{lem:uiscBOmega2} gives
\[
  \sup_{\bv\in\VP} \frac{\Lprod{\Pi_Bq}{\Div\bv}_\omega}{\Hseminorm{\bv}{\omega}}
  \geq \beta_1 \Lnorm{\Pi_Bq}{\omega} \,\deeper,
\]
which finishes the proof upon application of Lemma~\ref{lem:GVerfuerth} with a constant $\beta = \frac{\beta_1\beta_2}{1+\beta_1+\beta_2}$ independent of size and aspect ratios of the edge patch.

\begin{remark} 
  The essential part of the independence of the constant $\beta$ (in Theorem~\ref{thm:THEPs}) of the aspect ratio is given by
the proof of Lemma~\ref{lem:decompFLATME}. We observed in that proof that, for both cases, the constant 
$C_1$ is given by $3\sqrt{\tilde{C}}$, and then, it was unaffected by the additional refinement of the
edge patch (see the inequalities \eqref{square} and \eqref{18+1}). 
This confirms what was claimed in Remark~\ref{remark1}, and is affirmed by numerical experiments.
\end{remark}

\begin{remark}
  It is worth noticing that the proof of  Theorem~\ref{thm:THEPs} provides the inf-sup stability of a family of elements which contains the lowest order Taylor--Hood pair.
  In fact, the pressure space is allowed to be discontinuous across $\gamma$, which somehow generalises the results known so far.
\end{remark}

\begin{remark}
  A closer  look at the proofs of Lemmas~\ref{lem:decompFLATME}, \ref{lem:uiscBOmega2} and Theorem~\ref{thm:THEPs}
  shows  that the technique can be applied to ``overlapped'' edge patches, as shown for instance in Figure~\ref{fig:EPs1}(f) or Figure~\ref{fig:CPdecomposed}\,(centre).
  In this case, Lemma~\ref{lem:decompFLATME} has to be applied twice, that is, 
 there are two flat parts, say $M_1$ and $M_2$, where $M_2$ contains the vertically aligned ``long and thin'' cells.
 Then, $\MP(M_2)$ has to be decomposed into 
  $B_{M_2} = \set{q\in\MP(M_2)\colon \partial_xq=0 \text{ in }M_2}$
  and
  $G_{M_2}$.
  Hence, using the same arguments there exists $(v,0)\in\VP(M_2)$ such that 
  $\Lprod{\partial_x v}{g}_{M_2} = \Lnorm{g}{M_2}^2$
  and 
  $\Hseminorm{v}{M_2}\leq C_1\Lnorm{g}{M_2}$ for all $g\in G_{M_2}$.
  Then, similar to \eqref{def:Bomega} and the arguments thereafter, the pair consisting of the velocity space 
  $\VX \bydef \set{(0,v)\in\VP(M_1)}\oplus\VP(M')\oplus\set{(v,0)\in\VP(M_2)}$
  and the pressure space 
  $G_\omega \bydef G_{M_1}\oplus \MP(M') \oplus G_{M_2}$
  is uniformly inf-sup stable.
  Then, an adapted proof of Lemma~\ref{lem:uiscBOmega2} shows 
  that $B_\omega = B_{M_1}\oplus B_{M_2}\oplus \mathrm{span}\set{\phi_0, \phi_0'}$, where $\phi_0'$ is another piecewise constant function, is controlled by the six bubbles on the edges connecting $M_1, M'$ and $M_2, M'$.
  Finally, the proof of Theorem~\ref{thm:THEPs} follows analogously considering these modified definitions of $G_\omega, B_\omega$  and $\VX$.
  
\end{remark}

\subsection{Proof of  Theorem~\ref{thm:THCPs}}

\begin{figure}[bhtp]
  \centering
	\includegraphics{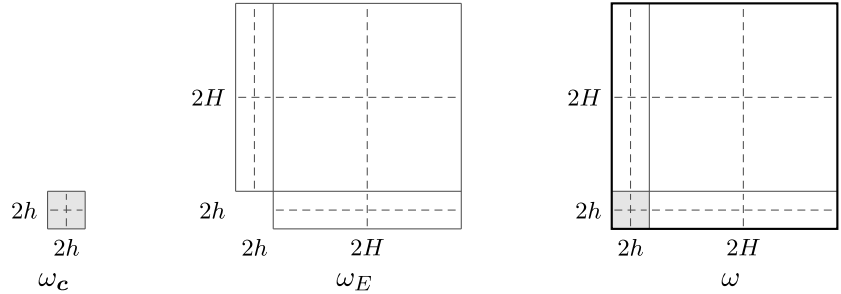}
	\caption{A corner patch  decomposed into $\omega_\bc$ and $\omega_E^{}$ satisfying Hypothesis~\ref{hyp_GoodCorner} and \ref{hyp_GoodEdgePatch}, respectively.}
  \label{fig:CPdecomposed}
\end{figure}

In this section we use notations $\omega_\bc, \omega_E^{}, \omega$ as suggested in Figure~\ref{fig:CPdecomposed} for the complete and parts of the (refined) corner patch.

As done in the previous section we see that
\[
\MP(\omega) \subset \widetilde{\MP}:=\MP(\omega_E)\oplus\MP(\omega_\bc) \oplus\mathrm{span}\set{\phi_\bc} \,,
\]
where $\phi_\bc$ is defined as
\[
\phi_\bc(x,y)=\left\{\begin{array}{ll} 1 & \textrm{if}\; (x,y)\in \omega_\bc\,,\\ -\frac{|\omega_\bc|}{|\omega_E^{}|}
& \textrm{otherwise}\,. \end{array}\right.
\]

We apply the technique  developed in \cite{ABW14} to  prove Theorem~\ref{thm:THCPs}.
Thanks to Theorem~\ref{thm:THEPs}, we know that the pair $\VP(\omega_E^{})\times\MP(\omega_E^{})$ is uniformly inf-sup stable, that is, its stability constant does not depend on $Hh^{-1}$. 
In addition, if we recall the Hypothesis~\ref{hyp_GoodCorner}, then the Taylor--Hood pair is inf-sup stable on $\omega_\bc$.

Let now  $q\in\MP(\omega)$, that is,  $q = q^\star + \Pi_\bc q$, where $q^\star \in\MP(\omega_\bc)\oplus\MP(\omega_E)$ and $\Pi_\bc q\in\mathrm{span}\set{\phi_\bc}$ is constant on $\omega_\bc$ and $\omega_E$.
Hence, the inf-sup conditions on $\omega_\bc$ and $\omega_E^{}$ (mentioned above) imply
   \[
	 \sup_{\bv\in\VP(\omega)} \frac{\Lprod{q}{\Div\bv}_\omega}{\Hseminorm{\bv}{\omega}} 
	 \geq\sup_{\bv\in\VP(\omega_E)\oplus\VP(\omega_\bc)} \frac{\Lprod{q^\star}{\Div\bv}_\omega}{\Hseminorm{\bv}{\omega}}
	 \geq \min\set{\beta, \beta_\bc}\Lnorm{q^\star}{\omega}\,,
   \]
   where $\beta$ is from Theorem~\ref{thm:THEPs} and $\beta_\bc$ is from Hypothesis~\ref{hyp_GoodCorner}.
   We will then finish the proof by showing there exists a constant $C>0$ such that
   \[
   \Lnorm{q^\star}{\omega} \geq  C \Lnorm{\Pi_\bc q}{\omega} \,\deeper.\]
   To this end, we note that the projection $\Pi_\bc q$ is given by
   \[
	 \Pi_\bc q= \avge{q}{\omega_\bc}\chi_{\omega_\bc}^{} + \avge{q}{\omega_E}\chi_{\omega_E}^{} \,\deep,
   \]
   and since $0=\avge{q}{\omega}$, we have $\avge{q}{\omega_E} = -\meas{\omega_\bc}\meas{\omega_E}^{-1} \avge{q}{\omega_\bc}$.
   Hence
   \begin{equation}
	 \Lnorm{\Pi_\bc q}{\omega}^2
	 = \meas{\omega_E}\avge{q}{\omega_E}^2 + \meas{\omega_\bc}\avge{q}{\omega_\bc}^2 
   = \left(\frac{\meas{\omega_\bc}}{\meas{\omega_E}} + 1\right)  \meas{\omega_\bc}\avge{q}{\omega_\bc}^2 
   = C_3 \meas{\omega_\bc}\avge{q}{\omega_\bc}^2 \,,
	 \label{eq:L2EqualAVG}
   \end{equation}
   where $C_3 = \meas{\omega}\meas{\omega_E}^{-1}\ge 1$.
   Now let $\Gamma_\bc\bydef \omega_\bc\cap \omega_E$, and let $e\subset\Gamma_\bc$ be an arbitrary edge satisfying $e= K_e\cap K_e'$ with $K_e\subset\omega_\bc$.
   Then, from  $\avge{q}{\omega_\bc} - \avge{q}{\omega_E} = \avge{\jump{\Pi_\bc q}}{e}$,
   and since $q$ is continuous in $\omega$,
   we get 
   \[
	 C_3  \avge{q}{\omega_\bc} = \avge{\jump{\Pi_\bc q}}{e} = -\avge{\jump{q^\star}}{e}
	  \spacedtext{and}
	  \avge{q}{\omega_\bc}^2 = C_3^{-2} \avge{\jump{q^\star}}{e}^2 \,\deeper.
   \]
   Then,  Cauchy's inequality and \cite[Lemma 2.1]{ABW14} yield
   \[
	 \begin{aligned}
	 \avge{q}{\omega_\bc}^2
	 &=   \frac{1}{\meas{\Gamma_\bc}} \int_{\Gamma_\bc}  \avge{q}{\omega_\bc}^2
	 \\
	 &=   \frac{1}{C_3^2\meas{\Gamma_\bc}}\sum_{e\subset\Gamma_\bc}\int_{e} \avge{\jump{q^\star}}{e}^2
	 \\
	 &\leq   \frac{1}{C_3^2\meas{\Gamma_\bc}}\sum_{e\subset\Gamma_\bc} \int_{e} \frac{1}{\meas{e}} \Lnorm{\jump{q^\star}}{e}^2
	 \\
	 &\leq   \frac{1}{C_3^2}\sum_{e\subset\Gamma_\bc} \frac{\meas{e}}{\meas{\Gamma_\bc}}  \left( \frac{1}{\meas{K_e}} + \frac{1}{\meas{K_e'}}\right) 4 \Lnorm{q^\star}{K_e\cup K_e'}^2 \,\deeper.
	 \end{aligned}
   \]
   Now, using $\meas{K_e}\leq \meas{K_e'}$,
   $\frac{\meas{e}}{\meas{\Gamma_\bc}} \frac{\meas{\omega_\bc}}{\meas{K_e}}=1$ 
   and
   $1/C_3\leq1$
   we get from \eqref{eq:L2EqualAVG}
  \[
	 \Lnorm{\Pi_\bc q}{\omega}^2
	 = C_3 \meas{\omega_\bc}\avge{q}{\omega_\bc}^2
	 \leq \frac{1}{C_3}  \sum_{e\subset\Gamma_\bc} \frac{\meas{e}}{\meas{\Gamma_\bc}}\frac{\meas{\omega_\bc}}{\meas{K_e}} 8 \Lnorm{q^\star}{K_e\cup K_e'}^2
	 \leq 8\Lnorm{q^\star}{\omega}^2 \,\deeper,
   \]
   as required.

\begin{remark}\label{rem:CPrefinements}
In this proof we have not considered the possibility of refining the corner patches. 
If we were to consider that case, then in the last step of the above proof we would be led to
use the fact that, for a corner patch that has been refined uniformly $r$ times, we have
\[
\frac{\meas{e}}{\meas{\Gamma_\bc}} \frac{\meas{\omega_\bc}}{\meas{K_e}}=2^{r-1}\,.
\]
This would give as a result the inf-sup constant 
\[
\beta_\PO^{} \sim C 2^{-\frac{r-1}{2}}
\]
which shows a dependency of the inf-sup constants on the refinement of the partition.
This dependency is not an artefact of the proof, as the numerical results in Section~\ref{sec:refinements} show.
\end{remark}

\section{The triangulated case}
\label{sec:App2x1MEP2P1}

In this section we prove  Theorem~\ref{thm:THEPs} and \ref{thm:THCPs} for triangulated edge- and corner patches.
A single change is required to prove Theorem~\ref{thm:THCPs}, \emph{i.e.}, we have to replace the used discrete trace estimate by one that is valid on anisotropic triangles, see \emph{e.g.}\ \cite[Theorem 3]{WH03}.

Then it remains to prove Theorem~\ref{thm:THEPs}.
To this end, we note that the proofs of Lemma~\ref{lem:GVerfuerth}, Lemma~\ref{lem:uiscBOmega2} remain valid
and we only need  to prove  Lemma~\ref{lem:decompFLATME} for triangulated anisotropic (flat) macro elements.
\subsection{Proof of Theorem~\ref{thm:THEPs}}
We perform a direct computation which  is simplified by using  the following three-point quadrature formula 
 \[
   \int_{K} f \,\mathrm{d}\b{x} \approx \frac{\meas{K}}{3}\sum_{i=1}^{3} f( \b{m}_i) \,,
 \]
 where $\b{m}_i$ are the mid-points of the edges of triangle~$K$.
 This formula is exact for $f\in\mathbb{P}_2(K)$.

 Now, we let $M =[-H,H]\times[-h,h]$ and notice that however we triangulate $M$,
 the space 
	$B_M = \mathrm{span}\set{\phi_{1}, \,\phi_{2}}$
 defined in Lemma~\ref{lem:decompFLATME} is a subspace of  $\MP(M)$.
Next we  prove \eqref{eq:iscGM} on the triangulated macro element.

 \underline{Case 1:} $M$ is split at $x=0$ and then triangulated.
 Again, we have $\dim G_M =3$ and 
 for a basis of $G_M$ we choose $\set{\varphi_i}_{i=3}^5$ to be the nodal interpolants of $\set{\phi_i}_{i=3}^5$ in Lemma~\ref{lem:decompFLATME}.
 These functions take the values $+1/-1/0$ at the signs $+/-/0$ shown in Figure~\ref{fig:pressurePatterns}.
 For simplicity (but without loss of generality) we chose a triangulation on which $\varphi_3,\varphi_5$ remain even in $x$, while $\varphi_4$ is odd in $x$. Additionally the product $\varphi_3\varphi_5$ vanishes in each of the quadrature points, and therefore $\set{\varphi_i}_i$ is an orthogonal basis of $G_M$.
 Hence,  for 
 $g\in G_M$ with  $g=\sum_{i=3}^{5}q_i\varphi_i$,  the quadrature formula yields
\[
  \Lnorm{g}{M}^2 = \sum_{i=3}^{5}\Lnorm{q_i\varphi_i}{M}^2
  = \frac{\meas{M}}{6} \left( 2q_3^2 + q_4^2 + 2q_5^2 \right) \,.
\]

\begin{figure}[htbp]
   \centering
   \begin{tabular}{ccc}
\includegraphics{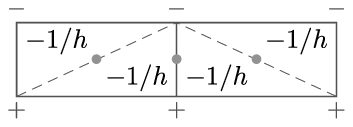}&
\includegraphics{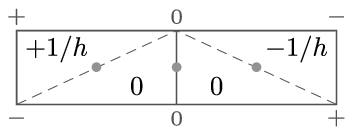}&
\includegraphics{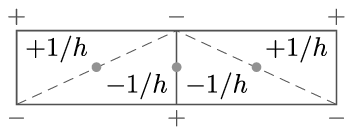} \\
$\varphi_3$ & 
$\varphi_4$ & 
$\varphi_5$
   \end{tabular}
   \caption{Basis of $G_M$ with sign-patterns of $\varphi_i$ and $\partial_y\varphi_i$.}
   \label{fig:pressurePatterns}
\end{figure}

Now, let $b_0,b_1,b_2$ be the quadratic bubble functions, each supported on two triangles and taking the value $1$ at the midpoint of the shared edge.
Then defining 
$v_3,v_4,v_5$ similar to   Lemma~\ref{lem:decompFLATME} by
$v_3 \bydef b_2+b_0$, $v_4  \bydef b_2-b_0$ and $v_5  \bydef 2 b_1 - (b_2+b_0)$
we get
\[
  \Lprod{\partial_yv_i}{g}_M 
  =-\Lprod{v_i}{\partial_yg}_M 
  =-q_i\Lprod{v_i}{\partial_y\varphi_i}_M \spacedtext{for} i=3,4,5\,,
\]
and then $v^\star \bydef \sum_{i=3}^{5} \alpha_iv_i$ satisfies
\[
  \Lprod{\partial_yv^\star}{g}_M 
  =-\Lprod{v^\star}{\partial_yg}_M 
  =-\sum_{i=3}^{5}\alpha_iq_i\Lprod{v_i}{\partial_y\varphi_i}_M \,\deeper.
\]
To calculate these products we use the quadrature formula and the values of  $\restrict{(\partial_y\varphi_i)}{K}$, given inside the triangles shown in Figure~\ref{fig:pressurePatterns}.
We obtain
\begin{gather*}
  \Lprod{v_3}{\partial_y\varphi_3}_M = -\meas{M}/(3h) \,,\\
  \Lprod{v_4}{\partial_y\varphi_4}_M = -\meas{M}/(6h) \,,\\
  \Lprod{v_5}{\partial_y\varphi_5}_M 
  =\Lprod{2b_1}{\partial_y\varphi_5}_M = -\meas{M}/(3h)\,,
\end{gather*}
and choosing $\alpha_i = -q_i h$ we obtain
\[
  \Lprod{\partial_yv^\star}{g}_M = \sum_{i=3}^{5} \Lnorm{q_i\varphi_i}{M}^2 = \Lnorm{g}{M}^2 \,\deeper.
\]
Finally, since $\abs{\alpha_i}\leq hq_i$ and $\Hseminorm{v_i}{M}^2\leq Ch^{-2}\meas{M}$
we get
\[
  \Hseminorm{v^\star}{M}^2
  \leq C\sum_{i=3}^{5}\Hseminorm{\alpha_iv_i}{M}^2
  \leq C\sum_{i=3}^{5}h^2q_i^2 (h^{-2}\meas{M})
  = C\Lnorm{g}{M}^2\,\deeper,
\]
which finishes case 1  as $(0,v^\star)\in\VP(M)$.

\underline{Case 2:} 
On appropriate triangulations of a 2-by-2 macro element one can construct orthogonal bases of the velocity and pressure spaces, again even and odd on $y$.
Then, the proof  follows   a similar way to Case 1.

\subsection{A numerical confirmation}
The results presented in Table~\ref{edgeP2P1-allowed} 
show that the Taylor--Hood pair is also uniformly stable on triangulated edge and corner patches.
Furthermore, we see that the difference between the inf-sup constants in column 1 and 2 is marginal, that is, the additional refinement does not affect the inf-sup constant.
On the other hand,  we show that hypothesis~\ref{hyp_GoodEdgePatch} for edge patches must be satisfied for their triangulated versions, as Figure~\ref{fig:EPtribad} confirms.

\begin{figure}[hbtp]
  \centering
  \includegraphics[scale=1]{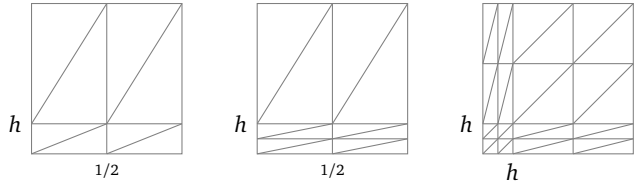}
  \caption{Triangulated edge and corner patches satisfying the hypotheses
of Theorems~\ref{thm:THEPs} and \ref{thm:THCPs}. In all cases, $\Omega=(0,1)^2$.}
  \label{fig:CPsP2P1}
\end{figure}

\begin{table}[hbtp]
\centering
\caption{Discrete inf-sup constants of the $\mathbb{P}_2^2\times\mathbb{P}_1$ pair on the partitions shown in Fig.~\ref{fig:CPsP2P1}}
\label{edgeP2P1-allowed}
\begin{tabular}{lccc}
\toprule
\multicolumn{1}{c}{$h$} 
& \multicolumn{1}{c}{Fig.~\ref{fig:CPsP2P1}(left)} 
& \multicolumn{1}{c}{Fig.~\ref{fig:CPsP2P1}(centre)} 
& \multicolumn{1}{c}{Fig.~\ref{fig:CPsP2P1}(right)} 
\\
\midrule
$10^{-1}$  &    $0.3346$ &  $0.3332$ &    $0.3844$\\
$10^{-2}$  &    $0.2690$ &  $0.2677$ &    $0.3744$\\
$10^{-3}$  &    $0.2480$ &  $0.2478$ &    $0.3519$\\
$10^{-4}$  &    $0.2453$ &  $0.2452$ &    $0.3493$\\
$10^{-5}$  &    $0.2450$ &  $0.2450$ &    $0.3491$\\
\bottomrule
\end{tabular}
\end{table}

\begin{figure}[htbp]
  \centering
  \raisebox{-.5\height}{\includegraphics{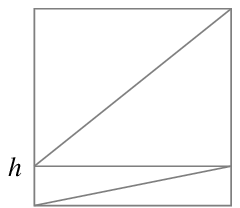}}
  \hspace{1cm}
\begin{tabular}{lc}
\toprule
\multicolumn{1}{c}{$h$} 
& \multicolumn{1}{c}{$\beta_\PO$} 
\\
\otoprule
$10^{-1}$  &  $1.788\cdot10^{-1}$ \\
$10^{-2}$  &  $8.236\cdot10^{-2}$ \\
$10^{-3}$  &  $2.725\cdot10^{-2}$ \\
$10^{-4}$  &  $8.656\cdot10^{-3}$ \\
$10^{-5}$  &  $2.739\cdot10^{-3}$ \\
\bottomrule
\end{tabular}
\caption{A triangulated  edge patch that does not satify Hypothesis~\ref{hyp_GoodEdgePatch} and the associated discrete inf-sup constants for the $\mathbb{P}_2\times\mathbb{P}_1$ pair.}
  \label{fig:EPtribad}
\end{figure}

\section{Possible extensions}
In this work we have proven the uniform inf-sup stability of the lowest order Taylor--Hood pair in a family of anisotropic meshes.
Up to our best knowledge, this is the first proof available for this pair on anisotropic meshes.
The numerical evidence shown suggests that the hypotheses made for the partitions (macro elements) are minimal, so the results presented here are optimal.

There are, nevertheless, open questions.
The first is the possible extension to higher order polynomials.
Another possible extension is the possibility to allow geometric refinements towards a corner of a given partition. 
The proof of Theorem~\ref{thm:THCPs} would not fail in this situation provided the result on geometric edge patches holds. In fact, a dependency on a level of refinement as documented in Remark~\ref{rem:CPrefinements} would not occur. 
Finally, the problem of stability of an anisotropic refinement strategy, driven by \emph{a posteriori} error estimators, is also a topic of interest.
All these constitute open questions that will be subject of future research.

%


\providecommand{\bysame}{\leavevmode\hbox to3em{\hrulefill}\thinspace}
\providecommand{\MR}{\relax\ifhmode\unskip\space\fi MR }
\providecommand{\MRhref}[2]{%
  \href{http://www.ams.org/mathscinet-getitem?mr=#1}{#2}
}
\providecommand{\href}[2]{#2}

\end{document}